\theoremstyle{plain}
\newtheorem{theorem}{Theorem}[section]
\newtheorem*{theorem*}{Theorem}
\newtheorem*{maintheorem*}{Main Theorem}
\newtheorem{prop}[theorem]{Proposition}
\newtheorem{corollary}[theorem]{Corollary}
\newtheorem{lemma}[theorem]{Lemma}
\newtheorem*{conjecture*}{Conjecture}
\newtheorem{question}[theorem]{Question}
\theoremstyle{definition}
\newtheorem{definition}[theorem]{Definition}
\newtheorem*{definition*}{Definition}
\newtheorem*{example*}{Example}
\newtheorem*{notation*}{Notation}
\newtheorem*{notation-conv*}{Notation and convention}
\newtheorem*{convention*}{Convention}
\newtheorem{remark}[theorem]{Remark}
\newcommand{\Z}{{\mathbb Z}}
\newcommand{\C}{{\mathbb C}}
\newcommand{\Q}{{\mathbb Q}}
\newcommand{\R}{{\mathbb R}}
\newcommand{\N}{{\mathbb N}}
\begin{document}
\title{ Twisted Fourier-Mukai number of a $K3$ surface }
\author{Shouhei Ma \\
{\small Graduate School of Mathematical Sciences, University of Tokyo   } \\
{\small 3-8-1\: Komaba\: Meguro-ku, Tokyo\: 153-8914, Japan } \\
{\small E-mail: sma@ms.u-tokyo.ac.jp } }
\date{}
\maketitle

\begin{abstract}
We give a counting formula for 
the twisted Fourier-Mukai partners of 
a projective $K3$ surface. 
As an application, 
we describe   
all twisted Fourier-Mukai partners 
of a projective $K3$ surface 
of Picard number $1$. 
\end{abstract}

\section{Introduction}

Fourier-Mukai (FM) partners of a projective $K3$ surface $S$ 
have been studied since Mukai's work \cite{Mu}. 
The basic results due to 
Mukai and Orlov (\cite{Or}) are as follows :  
\begin{itemize}
 \item[(1)] If a $2$-dimensional moduli space $M$ of stable sheaves on $S$ 
            is non-empty and compact, 
            then $M$ is a $K3$ surface. 
            When $M$ is  {\it fine\/}, 
            a universal sheaf induces an equivalence 
            $D^{b}(M) \simeq D^{b}(S)$. 
 \item[(2)] Every FM partner of $S$ is isomorphic to a certain $2$-dimensional 
            {\it fine\/} moduli space of stable sheaves on $S$. 
 \item[(3)] A projective $K3$ surface $S'$ is a FM partner of $S$ 
            if and only if their Mukai lattices are Hodge isometric. 
\end{itemize} 
Using Mukai-Orlov's theorem (especially $(3)$),          
Hosono-Lian-Oguiso-Yau (\cite{H-L-O-Y}) 
derived a counting formula for the FM partners of $S$.

On the other hand, 
it has been recognized that 
studying {\it twisted\/} FM partners 
as well as usual (i.e, untwisted) FM partners of $S$ 
is important 
when analyzing the derived category $D^{b}(S)$ of $S$. 
Recall that 
a twisted FM partner of $S$ is 
a twisted $K3$ surface $(S', \alpha ')$ such that 
there is an equivalence  
$D^{b}(S', \alpha ') \simeq D^{b}(S)$. 
C\u ald\u araru (\cite{Ca1}, \cite{Ca2}), 
Huybrechts-Stellari (\cite{H-S1}, \cite{H-S2}), 
and Yoshioka (\cite{Yo}) 
generalized Mukai and Orlov's results to twisted situation. 
When $S$ is untwisted, 
their results can be stated as follows : 
\begin{itemize}
 \item[(1)] When a $2$-dimensional {\it coarse\/} moduli space $M$ of stable sheaves on $S$ 
            is a $K3$ surface, 
            a {\it twisted\/} universal sheaf induces an equivalence 
            $D^{b}(M, \alpha ) \simeq D^{b}(S)$,  
            where $\alpha $ is the obstruction to the existence of a universal sheaf. 
 \item[(2)] Every twisted FM partner of $S$ is isomorphic to a certain $2$-dimensional 
            {\it coarse\/} moduli space of stable sheaves on $S$ endowed with a natural twisting. 
 \item[(3)] A twisted $K3$ surface $(S', \alpha ')$ is a twisted FM partner of $S$ 
            if and only if the twisted Mukai lattice of $(S', \alpha ')$ 
            and the Mukai lattice of $S$ are Hodge isometric. 
\end{itemize}

Let 
${\rm FM\/}^{d}(S)$ 
be the set of 
isomorphism classes of twisted FM partners $(S', \alpha ')$ of $S$ 
with 
${\rm ord\/}(\alpha ')=d$. 
The number 
$\#{\rm FM\/}^{d}(S)$ 
is an invariant of the category $D^{b}(S)$ 
representing the number of certain geometric origins of $D^{b}(S)$. 
The aim of this paper 
is to present an explicit formula for 
$\#{\rm FM\/}^{d}(S)$. 
It enables us to calculate 
$\#{\rm FM\/}^{d}(S)$ 
from 
lattice-theoretic informations about the Neron-Severi lattice $NS(S)$ 
and 
the knowledge of the group $O_{Hodge}(T(S))$ 
of the Hodge isometries of the transcendental lattice $T(S)$.

Let 
$I^{d}(D_{NS(S)})$ 
be the set of 
${\rm order\/}=d$ isotropic elements of the discriminant form $D_{NS(S)}$. 
From an element $x\in I^{d}(D_{NS(S)})$ 
we can construct overlattices 
$M_{x}$, $T_{x}$ of $NS(S)$, $T(S)$ respectively, 
and a homomorphism 
$\alpha _{x} : T_{x} \twoheadrightarrow {\Z}/d{\Z}$. 
After defining certain subsets 
$\mathcal{G}_{1}(M)$, $\mathcal{G}_{2}(M)$ of 
the genus $\mathcal{G}(M)$ of a lattice $M$ 
such that 
$\mathcal{G}(M) = \mathcal{G}_{1}(M) \sqcup \mathcal{G}_{2}(M)$, 
our formula is stated as follows.

\begin{theorem}[Theorem \ref{the formula}]\label{main1} 
For a projective $K3$ surface $S$ 
the following formula holds. 
\begin{eqnarray*}
\#{\rm FM\/}^{d}(S) & = &
 \mathop{\sum}_{x}   
   \left\{ \: \sum_{M} \# \Bigl( \: O_{Hodge}(T_{x}, \alpha _{x}) \backslash O(D_{M}) / O(M) \: \Bigr) \right. \\
&    & \: \: \: \: \: \: \: 
 \left.   
+ \; \varepsilon (d) \: \sum_{M'} \# \Bigl( \: O_{Hodge}(T_{x}, \alpha _{x}) \backslash O(D_{M'}) / O(M') \: \Bigr) \: \right\}
\end{eqnarray*} 
Here 
$x$ runs over the set $O_{Hodge}(T(S)) \backslash I^{d}(D_{NS(S)})$ 
and the lattices $M$, $M'$ run over the sets 
$\mathcal{G}_{1}(M_{x})$, $\mathcal{G}_{2}(M_{x})$ respectively. 
The natural number 
$\varepsilon (d)$ is defined by 
$\varepsilon (d)=1$ if $d=1, 2$, and 
$\varepsilon (d)=2$ if $d\geq 3$. 
\end{theorem}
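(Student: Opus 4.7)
The plan is to use the twisted derived Torelli theorem (fact (3) above) to translate the counting of $\#{\rm FM}^{d}(S)$ into an enumeration of Hodge-isometry classes of lattice data attached to the Mukai lattice $\tilde{H}(S)$. A twisted partner $(S', \alpha')$ together with a B-field lift $B' \in H^{2}(S', \Q)$ of $\alpha'$ produces a twisted Mukai lattice $\tilde{H}(S', B')$ Hodge-isometric to $\tilde{H}(S)$; the image under such an isometry of the canonical primitive isotropic class (coming from the rank-one factor in the Mukai decomposition) produces, after projection to the discriminant form of $NS(S)$, a well-defined element in the orbit space $O_{Hodge}(T(S)) \backslash I^{d}(D_{NS(S)})$. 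Checking that this assignment is independent of the B-field lift and of the choice of isometry would yield the outer index $x$ of the formula.

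For each fixed $x$, the overlattices $M_{x}, T_{x}$ and the surjection $\alpha_{x} : T_{x} \twoheadrightarrow \Z/d\Z$ encode the structure of any $(S', \alpha')$ mapping to $x$: the Neron-Severi lattice $NS(S')$ must lie in the genus $\mathcal{G}(M_{x})$, the transcendental lattice $T(S')$ is Hodge-isometric to $T_{x}$, and the Brauer class corresponds under the canonical isomorphism $Br(S')[d] \cong {\rm Hom}(T(S'), \Z/d\Z)$ to $\alpha_{x}$. Conversely, surjectivity of the period map for K3 surfaces, combined with Nikulin's existence theorem for primitive embeddings, guarantees that each candidate $M \in \mathcal{G}(M_{x})$ is realized as $NS(S')$ for some twisted partner.

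Next I would enumerate, for fixed $x$ and fixed $M \in \mathcal{G}(M_{x})$, the number of isomorphism classes of twisted K3 surfaces $(S', \alpha')$ with $NS(S') \cong M$ that give rise to the orbit $x$. By Nikulin's gluing description of primitive overlattices, such a pair corresponds to a gluing isomorphism $D_{M} \cong D_{T_{x}}$ compatible with $\alpha_{x}$, modulo the left action of $O_{Hodge}(T_{x}, \alpha_{x})$ and the right action of $O(M)$; this yields exactly the double coset count $\# (O_{Hodge}(T_{x}, \alpha_{x}) \backslash O(D_{M}) / O(M))$ appearing in the formula.

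Finally, the decomposition $\mathcal{G}(M_{x}) = \mathcal{G}_{1}(M_{x}) \sqcup \mathcal{G}_{2}(M_{x})$ and the factor $\varepsilon(d)$ should arise from the sign ambiguity in the Brauer class: the twisted surfaces $(S', \alpha')$ and $(S', -\alpha')$ are isomorphic precisely when $d \le 2$ or when the lattice admits an appropriate sign-reversing isometry. The set $\mathcal{G}_{1}(M_{x})$ ought to collect the latter lattices, each contributing a single double coset, while $\mathcal{G}_{2}(M_{x})$ collects those where $\alpha_{x}$ and $-\alpha_{x}$ give distinct classes, doubling the contribution exactly when $d \ge 3$. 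The main obstacle I anticipate is in this last step: making the decomposition precise, proving compatibility with the double-coset count, and verifying that no partner is counted twice across different orbits $x$ or different lattices $M$. This would rest on a careful analysis of how $O(D_{M_{x}})$ acts on the isotropic lifts of $x$ and on the sign-reversing isometries of $\alpha_{x}$.
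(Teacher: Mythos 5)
Your overall architecture coincides with the paper's: the outer index over $O_{Hodge}(T(S))\backslash I^{d}(D_{NS(S)})$ is realized in the paper by the map $\mu$ on the quotient $\mathcal{FM}^{d}(S)$ of ${\rm FM\/}^{d}(S)$ (Lemma \ref{injective 1} and Proposition \ref{bijective if untwisted 1}), and your inner double-coset count is the content of Proposition \ref{number of orbits}, which the paper reaches by parametrizing each fiber of the quotient map by $\Gamma (S_{1}, \alpha _{1})^{+}\backslash {\rm Emb\/}(U, \widetilde{NS}(S_{1}))$ rather than directly by gluing data. However, the step you flag as your ``main obstacle'' is a genuine gap, and it is exactly where the real work lies. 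The difficulty is that the double coset $O_{Hodge}(T_{x}, \alpha _{x})\backslash O(D_{M})/O(M)$ quotients by the full group $O(M)$, whereas the Torelli theorem only converts \emph{effective} (positive-cone-preserving) Hodge isometries into isomorphisms of surfaces. An orientation-reversing element of $O(M)$ can be promoted to an effective Hodge isometry only after composing with $-{\rm id\/}$, which replaces $\alpha$ by $-\alpha$. Consequently the set that classifies isomorphism classes of partners within a fiber is the set of orbits under the orientation-preserving subgroup $\Gamma (S_{1}, \alpha _{1})^{+}$, not the $\Gamma (S_{1}, \alpha _{1})$-orbits that the double coset computes; each of the latter splits into one or two of the former, and $\varepsilon (d)$ together with the decomposition $\mathcal{G}(M_{x})=\mathcal{G}_{1}(M_{x})\sqcup \mathcal{G}_{2}(M_{x})$ records precisely this splitting.

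To close the gap you would need the paper's Lemma \ref{same orbit} and Proposition \ref{anti-symplectic}: for $d\geq 3$, the orbits $\Gamma (S_{1}, \alpha _{1})^{+}\cdot \varphi$ and $\Gamma (S_{1}, \alpha _{1})^{+}\cdot (-\varphi )$ coincide if and only if $(S_{\varphi}, \alpha _{\varphi})\simeq (S_{\varphi}, -\alpha _{\varphi})$, if and only if $S_{\varphi}$ admits an anti-symplectic automorphism, if and only if $\varphi (U)^{\perp}\cap \widetilde{NS}(S_{1})$ lies in $\mathcal{G}_{1}$. Your phrase ``admits an appropriate sign-reversing isometry'' gestures at this, but the precise condition is $O(M)_{0}^{+}\neq O(M)_{0}$: the isometry must act trivially on $D_{M}$ (so that it glues to $-{\rm id\/}$ on the transcendental side) while reversing orientation. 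Moreover, the implication from $(S_{\varphi}, \alpha _{\varphi})\simeq (S_{\varphi}, -\alpha _{\varphi})$ to the existence of an anti-symplectic automorphism is not formal; it uses the cyclicity of $O_{Hodge}(T(S_{\varphi}))$ to extract an anti-symplectic power of the given isomorphism. Finally, one must check that the splitting depends only on the isometry class of $\varphi (U)^{\perp}\cap \widetilde{NS}(S_{1})$ within the genus, so that the correction factor can be distributed over $\mathcal{G}_{1}(M_{x})$ and $\mathcal{G}_{2}(M_{x})$ as in the statement. None of this follows automatically from the gluing picture alone, so as written your argument would only establish the count of $\Gamma$-orbits, namely $\sum_{x}\sum_{M}\tau (x, M)$ without the $\varepsilon (d)$ refinement.
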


When $d=1$, 
Theorem \ref{main1} is Hosono-Lian-Oguiso-Yau's formula. 
Even though Theorem \ref{main1} seems complicated in appearance, 
it is rather adequate for calculations for the following reasons :  
\begin{itemize}
   \item The genus $\mathcal{G}(M)$, 
         and the homomorphism $O(M) \to O(D_{M})$ 
         have been studied in lattice theory, 
         and much is known (cf. \cite{Ni}).
   \item If we can identify the discriminant form $D_{NS(S)}$, 
         it is easy to calculate $I^{d}(D_{NS(S)})$.
   \item The group $O_{Hodge}(T(S))$ and its subgroup $O_{Hodge}(T_{x}, \alpha _{x})$ are cyclic, 
         and the Euler function of  the order of $O_{Hodge}(T(S))$ 
         must divides ${\rm rk\/}(T(S))$ (cf. \cite{H-L-O-Y}). 
         For example, 
         $O_{Hodge}(T(S)) = \{ \pm {\rm id\/} \}$ 
         whenever ${\rm rk\/} (T(S))$ is odd. 
\end{itemize}
In fact, 
we shall derive more simple formulas for   
$\# {\rm FM\/}^{d}(S)$ 
for several classes of $K3$ surfaces : 
Jacobian $K3$ surfaces,   
in particular $K3$ surfaces with ${\rm rk\/}(NS(S)) \geq 13$, 
$K3$ surfaces with $2$-elementary $NS(S)$, 
and 
$K3$ surfaces with ${\rm rk\/}(NS(S)) = 1$.

As a by-product of the proof of Theorem \ref{main1}, 
we obtain an upper bound for the twisted FM number of 
a {\it twisted\/} $K3$ surface (Proposition \ref{the upper bound}). 
The sharpness of the estimate  
is related with a C\u ald\u araru's problem stated in \cite{Ca1} 
(Remark \ref{sharpness and a conjecture}).  
Note that 
if a twisted $K3$ surface $(S', \alpha ')$ 
has an untwisted FM partner, 
e.g, if ${\rm rk\/}(NS(S')) \geq 12$ (\cite{H-S1}), 
then 
Theorem \ref{main1} 
gives a counting formula for the twisted FM number of $(S', \alpha ')$.

As an application of Theorem \ref{main1}, 
we shall describe all twisted FM partners of a $K3$ surface of Picard number $1$ 
as follows.

\begin{theorem}[Theorem \ref{(d, n) partners}]\label{main2}
Let 
$S$ be a projective $K3$ surface with 
$NS(S)={\Z}H$, $(H, H)=2n$.  
We have 
${\rm FM\/}^{d}(S) \ne \phi $ 
if and only if 
$d^{2}|n$. 
For a natural number $d$ with $d^{2}|n$, 
let 
$\{ v_{\sigma , k} \; | \; (\sigma , k)\in \Sigma \times ({\Z}/d{\Z})^{\times} \}$ 
be the primitive isotropic 
vectors in $\widetilde{NS}(S)$ 
defined by $(\ref{eqn:Mukai vector})$.  
Let
$M_{\sigma , k}$ be 
the moduli space of Gieseker stable sheaves on $S$ with Mukai vector $v_{\sigma , k}$, 
endowed with   
the obstruction 
$\alpha _{\sigma , k} \in {\rm Br\/}(M_{\sigma , k})$ 
to the existence of a universal sheaf.

$(1)$ 
If $d^{2}<n$ or $d\leq 2$, then 
\begin{equation*} 
{\rm FM\/}^{d}(S) = \left\{ \left. ( M_{\sigma , k}, \alpha _{\sigma , k} ) \:  \right| \: 
                            (\sigma , k) \in  \Sigma \times ({\Z}/d{\Z})^{\times} \: \right\} . 
\end{equation*}

$(2)$ 
Assume that 
$d^{2}=n$ and $d\geq 3$. 
Choose a set 
$\{ j \} \subset ({\Z}/d{\Z})^{\times}$ 
of representatives of  
$({\Z}/d{\Z})^{\times} / \{ \pm {\rm id\/} \} $.  
Then 
\begin{equation*} 
{\rm FM\/}^{d}(S) = \left\{ \left. ( M_{\sigma , k}, \alpha _{\sigma , k} ) \: \right| \: 
                            (\sigma , k) \in \Sigma \times \{ j \} \: \right\} . 
\end{equation*}
\end{theorem}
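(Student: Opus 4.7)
My plan is to deduce Theorem \ref{main2} by specializing the general formula of Theorem \ref{main1} to the case ${\rm rk\/}(NS(S))=1$, then matching the resulting arithmetic count with the explicit Mukai vector parametrization.

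The first step is to analyze the set $I^{d}(D_{NS(S)})$. Since $D_{NS(S)} \cong \Z/2n\Z$ with quadratic form $q([k]) = k^{2}/(2n) \bmod 2\Z$, any element of order $d$ has the form $[(2n/d)j]$ with $\gcd(j,d)=1$, and a direct computation shows that $q$ vanishes on it exactly when $d^{2} \mid n$. This immediately yields the non-emptiness criterion of the theorem. When $d^{2} \mid n$, $I^{d}(D_{NS(S)})$ is in bijection with $(\Z/d)^{\times}$, and the group $O_{Hodge}(T(S))$, which equals $\{\pm {\rm id\/}\}$ because ${\rm rk\/}(T(S))=21$ is odd, acts by $j \mapsto -j$.

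Next I would determine the local data attached to each isotropic class $x$. The overlattice $M_{x} \subset NS(S)^{*}$ is easily computed to be the rank-one lattice $\Z(H/d)$ of square $2n/d^{2}$, independent of $x$; being rank one and positive definite, its genus $\mathcal{G}(M_{x})$ is a singleton, so exactly one of $\mathcal{G}_{1}(M_{x})$ and $\mathcal{G}_{2}(M_{x})$ contributes in the formula. The subgroup $O_{Hodge}(T_{x}, \alpha _{x}) \subseteq \{\pm {\rm id\/}\}$ equals $\{\pm {\rm id\/}\}$ when $d \leq 2$ and collapses to $\{{\rm id\/}\}$ when $d \geq 3$, because $-{\rm id\/}$ negates the surjection $\alpha _{x} \colon T_{x} \twoheadrightarrow \Z/d\Z$. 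Substituting into Theorem \ref{main1} and computing the double cosets $O_{Hodge}(T_{x}, \alpha _{x}) \backslash O(D_{M_{x}}) / O(M_{x})$ for the cyclic discriminant form of order $2n/d^{2}$, one obtains the total count. The key subtlety is the dichotomy between $d^{2} < n$, where $D_{M_{x}}$ has nontrivial orthogonal group, and $d^{2} = n$, where $D_{M_{x}} = \Z/2\Z$ collapses; the $\mathcal{G}_{1}$ versus $\mathcal{G}_{2}$ assignment flips between the two cases, and combined with the factor $\varepsilon (d)$ this produces the count $|\Sigma| \cdot |(\Z/d)^{\times}|$ in case (1) and $|\Sigma| \cdot |(\Z/d)^{\times}/\{\pm 1\}|$ in case (2). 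This bookkeeping is what I expect to be the main obstacle.

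Finally, I would match the count with the moduli-theoretic description. By the twisted Mukai--Orlov theorem recalled in the introduction (bullet (2) of the twisted list), each twisted FM partner is realized as $(M_{v}, \alpha _{v})$ for some primitive isotropic $v \in \widetilde{NS}(S)$ inducing an order-$d$ class in $D_{NS(S)}$. It then remains to verify that the vectors $v_{\sigma , k}$ defined by $(\ref{eqn:Mukai vector})$ biject $\Sigma \times (\Z/d)^{\times}$ (respectively $\Sigma \times \{j\}$) with the data counted by Theorem \ref{main1}, with $k$ recording the isotropic class and $\sigma$ recording the double-coset representative. In case (2) the loss of $-{\rm id\/}$ from $O_{Hodge}(T_{x}, \alpha _{x})$ must be translated geometrically into an isomorphism $(M_{\sigma ,k}, \alpha _{\sigma ,k}) \cong (M_{\sigma ,-k}, \alpha _{\sigma ,-k})$, completing the identification.
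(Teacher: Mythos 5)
Your proposal is correct and follows essentially the same route as the paper: specialize Theorem \ref{main1} to $NS(S)={\Z}H$ to get the count of Proposition \ref{(d, n) number} (including the $\mathcal{G}_{1}$/$\mathcal{G}_{2}$ flip between $d^{2}=n$ and $d^{2}<n$ and the collapse of $O_{Hodge}(T_{x},\alpha_{x})$ to $\{{\rm id}\}$ for $d\geq 3$), and then match the count against the explicit family $(M_{\sigma,k},\alpha_{\sigma,k})$. Be aware, though, that the step you defer as ``it remains to verify'' is where most of the work of Section 5 actually sits: one must compute $NS(M_{\sigma,k})$, $T(M_{\sigma,k})$, the twisting via Lemma \ref{twist and disc form}, and the gluing isometry $(\ref{eqn:patching})$ in order to show (via the Torelli theorem and the determination of $Aut(M_{\sigma,1})$) that distinct $(\sigma,k)$ give non-isomorphic twisted partners --- this is Proposition \ref{moduli(sigma, k)} together with the proof of Theorem \ref{(d, n) partners}, after which the pigeonhole against Proposition \ref{(d, n) number} finishes the argument exactly as you intend.
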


Theorem \ref{main2} is a twisted generalization of a result of \cite{H-L-O-Y'}.

This paper is organized as follows. 
In Sect.2.1, 
we prepare some lattice theory 
following \cite{Ni}.
In Sect.2.2, 
we recall from \cite{Ca1} and \cite{H-S1}  
several facts about twisted $K3$ surfaces.
In Sect.3.1, 
we describe a quotient set of ${\rm FM\/}^{d}(S)$ 
in terms of lattice theory. 
In Sect.3.2 and 3.3, 
we describe the fibers of the quotient map 
in terms of lattice theory.   
In Sect.4,  
we derive Theorem \ref{main1} and its corollaries. 
In Sect.5,  
we prove Theorem \ref{main2}.

\begin{notation*}
By an {\it even lattice\/},
we mean 
a free ${\Z}$-module $L$ 
equipped with a non-degenerate symmetric bilinear form 
$( , ) : L\times L \to {\Z}$ 
satisfying 
$(x, x)\in 2{\Z}$ 
for all 
$x \in L$.
The group of isometries of $L$ is denoted by $O(L)$.
For two lattices $L$ and $M$, 
${\rm Emb\/}(L, M)$ is the set of the primitive embeddings of $L$ into $M$.
An element $l\in L$ is said to be  
{\it isotropic\/}
if $(l, l)=0$. 
The {\it hyperbolic plane\/} $U$ 
is the even indefinite unimodular lattice 
${\Z}e+{\Z}f, \: (e, e)=(f, f)=0, \: (e, f)=1$.

By a $K3$ {\it surface\/}, 
we mean a projective $K3$ surface over ${\C}$.
The Neron-Severi (resp. transcendental) lattice of a $K3$ surface $S$ 
is denoted by $NS(S)$ (resp. $T(S)$).
Set \: 
$\widetilde{NS} (S):= H^{0}(S, {\Z}) \oplus NS(S)\oplus H^{4}(S, {\Z})$, \: 
$\Lambda _{K3} := U^{3}\oplus E_{8}^{2}$, \: 
and \: 
$\widetilde{\Lambda }_{K3} := U \oplus \Lambda _{K3} = U^{4}\oplus E_{8}^{2}$.
\end{notation*}

\noindent{\em Acknowledgement.}
The author would like to 
thank Professor Shinobu Hosono 
for conversations that inspired this work. 
His thanks also go to 
Professors Keiji Oguiso and Ken-Ichi Yoshikawa 
for valuable comments. 
The idea of constructing moduli spaces in Section 5 
is due to Professor Oguiso.

\section{Preliminaries}

\subsection{Even lattices}

For an even lattice $L$, 
we can associate 
a finite Abelian group $D_{L}:= L^{\vee}/L $
equipped with a quadratic form 
$q_{L}:D_{L}\to {\Q}/2{\Z}, \: \: q_{L}(x+L)=(x, x)+2{\Z}$ \: 
for $x\in L^{\vee}$.
We call $(D_{L}, q_{L})$ 
the {\it discriminant form\/} of $L$.
There is a natural homomorphism 
$r_{L}: O(L)\to O(D_{L}, q_{L})$,  
whose kernel is denoted by $O(L)_{0}$.
We often write just 
$(D_{L}, q)$ or $D_{L}$ (resp. $r$) 
instead of $(D_{L}, q_{L})$ (resp. $r_{L}$).
Set 
\[
I^{d}(D_{L}):=\{ \: x\in D_{L} \: | \: q_{L}(x)=0 \in {\Q}/2{\Z}, \: {\rm ord\/}(x)=d \: \} .
\]

\begin{prop}[\cite{Ni}]\label{Nikulin1}
Let $M$ be a primitive sublattice of an even unimodular lattice $L$
with the orthogonal complement $M^{\perp}$. 
Then

{\rm (1)\/} There is an isometry $\lambda :(D_{M},q_{M})
\stackrel{\simeq}{\rightarrow} (D_{M^{\perp}},-q_{M^{\perp}})$.

{\rm (2)\/} For two isometries 
$\gamma _{M} \in O(M)$ and $\gamma _{M^{\perp}} \in O(M^{\perp})$, 
there is an isometry $\gamma _{L}\in O(L)$ 
such that $\gamma _{L}|_{M\oplus M^{\perp}}=\gamma _{M} \oplus \gamma _{M^{\perp}}$ 
if and only if  
$\lambda \circ r_{M}(\gamma _{M} )=r_{M^{\perp}}(\gamma _{M^{\perp}} )\circ \lambda $.
\end{prop}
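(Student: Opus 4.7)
The plan is to work throughout with the chain of inclusions
\[
M \oplus M^{\perp} \; \subset \; L \; = \; L^{\vee} \; \subset \; M^{\vee} \oplus (M^{\perp})^{\vee},
\]
where the equality $L = L^{\vee}$ uses the unimodularity of $L$. The central object is the finite group
\[
H_{L} := L / (M \oplus M^{\perp}) \; \subset \; D_{M} \oplus D_{M^{\perp}}.
\]
Since pairings between elements of $L$ lie in $\Z$, the subgroup $H_{L}$ is isotropic for the form $q_{M} \oplus q_{M^{\perp}}$. A short index computation, comparing discriminants across $M \oplus M^{\perp} \subset L \subset M^{\vee} \oplus (M^{\perp})^{\vee}$, yields $|H_{L}|^{2} = |D_{M}|\cdot |D_{M^{\perp}}|$ and also $|D_M| = |D_{M^\perp}|$.

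For (1), I would next show that the two projections $\pi_{1} : H_{L} \to D_{M}$ and $\pi_{2} : H_{L} \to D_{M^{\perp}}$ are isomorphisms. Injectivity of $\pi_{1}$ is immediate: if $(0, y) \in H_{L}$ then $y \in M^{\perp}$ modulo $M \oplus M^{\perp}$, so the class is trivial. For surjectivity of $\pi_{1}$, given $\bar m \in D_{M}$ represented by $m \in M^{\vee}$, the functional $(m, -) : M \to \Z$ extends to $L$ by unimodularity of $L$, and the difference gives an element of $(M^{\perp})^{\vee}$ paired with $m$ so that $m + m' \in L$; by primitivity of $M$ this is well-defined modulo $M \oplus M^{\perp}$. (The same argument applies to $\pi_{2}$ using the symmetry between $M$ and $M^{\perp}$, which are both primitive in $L$.) Then $\lambda := -\pi_{2} \circ \pi_{1}^{-1} : D_{M} \to D_{M^{\perp}}$ is a group isomorphism whose graph $\{(x, -\lambda(x)) : x \in D_{M}\}$ is exactly $H_{L}$, and isotropy of $H_{L}$ with respect to $q_{M} \oplus q_{M^{\perp}}$ translates directly into $q_{M}(x) = -q_{M^{\perp}}(\lambda(x))$, so $\lambda : (D_{M}, q_{M}) \to (D_{M^{\perp}}, -q_{M^{\perp}})$ is an isometry.

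For (2), the key observation is that any pair $(\gamma_{M}, \gamma_{M^{\perp}}) \in O(M) \times O(M^{\perp})$ acts on $M^{\vee} \oplus (M^{\perp})^{\vee}$ (preserving $M \oplus M^{\perp}$), and extends to an isometry of $L$ if and only if the induced action $(r_{M}(\gamma_{M}), r_{M^{\perp}}(\gamma_{M^{\perp}}))$ on $D_{M} \oplus D_{M^{\perp}}$ preserves the subgroup $H_{L}$. For the ``only if'' direction, if $\gamma_{L} \in O(L)$ restricts to $\gamma_{M} \oplus \gamma_{M^{\perp}}$, then $\gamma_{L}$ sends $L$ to $L$, hence preserves $H_{L}$. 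For the ``if'' direction, preservation of $H_{L}$ ensures that $(\gamma_{M} \oplus \gamma_{M^{\perp}})(L) \subset L$, and by the same argument applied to the inverse, equality holds. Rewriting ``preserves the graph of $\lambda$'' gives the condition $\lambda \circ r_{M}(\gamma_{M}) = r_{M^{\perp}}(\gamma_{M^{\perp}}) \circ \lambda$.

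The main technical obstacle I expect is the surjectivity of $\pi_{1}$ and $\pi_{2}$: this is where both primitivity of $M$ (and hence of $M^{\perp}$) and unimodularity of $L$ are essential, and careful bookkeeping of the sign conventions in the definition of $\lambda$ (so that it maps to $-q_{M^{\perp}}$ rather than $+q_{M^{\perp}}$) has to be done once and reused consistently in part (2). Everything else reduces to the structure of the glue group $H_{L}$ and its invariance under pairs of isometries.
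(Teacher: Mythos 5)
The paper does not prove this proposition at all --- it is quoted from Nikulin's paper \cite{Ni} (Propositions 1.5.1 and 1.6.1 there), so there is no in-paper argument to compare against. Your glue-group proof is correct and is essentially the standard argument from the cited reference: the identifications $H_{L}=L/(M\oplus M^{\perp})\subset D_{M}\oplus D_{M^{\perp}}$ as the graph of an (anti-)isometry, and the characterization of extendable pairs $(\gamma_{M},\gamma_{M^{\perp}})$ as those preserving $H_{L}$, are exactly the right mechanism, with primitivity of $M$ and unimodularity of $L$ entering precisely where you flag them (surjectivity of the projections $\pi_{1},\pi_{2}$).
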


Two even lattices $L$ and $L'$ are said to be $isogenus$ 
if 
$L\otimes {\Z}_{p} \simeq L'\otimes {\Z}_{p}$ 
for every prime number $p$ 
and ${\rm sign\/}(L)={\rm sign\/}(L')$. 
By \cite{Ni}, 
these are equivalent to the conditions that
$(D_{L},q)\simeq (D_{L'},q)$ and 
${\rm sign\/}(L)={\rm sign\/}(L')$. 
The set of the isometry classes of the lattices isogenus to $L$ is denoted by $\mathcal{G} (L)$.

\begin{prop}[\cite{Ni}]\label{Nikulin2}
Let $l(D_{L})$ be the minimal number of the generators of $D_{L}$.
If $L$ is indefinite and \, ${\rm rk\/}(L)\geq l(D_{L})+2$,\:  
then $\mathcal{G} (L)= \{ L \} $ 
and the homomorphism $r_{L}$ is surjective.
\end{prop}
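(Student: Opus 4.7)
The plan is to reduce both statements about $L$ to analogous statements about a small orthogonal complement $M$, via a primitive embedding $L \hookrightarrow \Lambda$ into an even unimodular lattice and Proposition \ref{Nikulin1}. Under the hypothesis $\mathrm{rk}(L) \geq l(D_{L}) + 2$, I would first construct a primitive embedding of $L$ into a suitable $\Lambda = U^{a} \oplus E_{8}^{b}$, chosen so that the orthogonal complement $M := L^{\perp}$ has the smallest possible rank. Nikulin's existence theorem for primitive embeddings, which uses precisely the rank condition and the indefiniteness of $L$, makes it possible to arrange $\mathrm{rk}(M) \leq l(D_{L})$. By Proposition \ref{Nikulin1}(1), $(D_{M}, q_{M}) \cong (D_{L}, -q_{L})$, so $l(D_{M}) = l(D_{L})$; combined with the always valid $l(D_{M}) \leq \mathrm{rk}(M)$, this forces $\mathrm{rk}(M) = l(D_{M})$, i.e.\ $M$ has ``maximal discriminant rank''.

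The main obstacle is then to establish the two statements for this specific small lattice $M$: $(a)$ $\mathcal{G}(M) = \{M\}$, and $(b)$ the reduction $r_{M} : O(M) \to O(D_{M})$ is surjective. Both are classical facts about lattices of maximal discriminant rank. The standard proof passes to $M \otimes \Z_{p}$ for each prime $p$, puts each local lattice into Jordan normal form, and verifies the local versions of $(a)$ and $(b)$ block by block via the classification of quadratic forms over $\Z_{p}$; the global statements then follow by the Hasse principle for isogenus classes and by assembling local isometries into a global one.

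Finally I would transfer $(a)$ and $(b)$ back to $L$. For genus uniqueness, given $L' \in \mathcal{G}(L)$, embed $L'$ primitively into the same $\Lambda$ with complement $M'$; by Proposition \ref{Nikulin1}(1) and a signature count, $M' \in \mathcal{G}(M)$, so $(a)$ gives $M' \cong M$. Nikulin's uniqueness theorem for primitive embeddings of $M$ into $\Lambda$, whose rank hypothesis $\mathrm{rk}(L) \geq l(D_{M}) + 2$ is exactly our assumption, then produces $\gamma \in O(\Lambda)$ carrying $M'$ onto $M$, hence $L' = (M')^{\perp}$ onto $L = M^{\perp}$, so $L' \cong L$. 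For surjectivity, given $\phi \in O(D_{L})$ I would form a twisted overlattice $\Lambda_{\phi}$ of $L \oplus M$ by gluing along $\lambda \circ \phi$ in place of the anti-isometry $\lambda$ of Proposition \ref{Nikulin1}(1); $\Lambda_{\phi}$ is again even unimodular of the same signature, so by the Milnor--Husemoller classification of even indefinite unimodular lattices $\Lambda_{\phi} \cong \Lambda$, yielding a second primitive embedding $i_{\phi} : L \hookrightarrow \Lambda$ with orthogonal complement again $M$. The same uniqueness theorem furnishes $\gamma \in O(\Lambda)$ carrying the original embedding $i : L \hookrightarrow \Lambda$ onto $i_{\phi}$; unpacking the construction shows that the induced $\psi_{L} \in O(L)$, defined by $i \circ \psi_{L} = \gamma^{-1} \circ i_{\phi}$, satisfies $r_{L}(\psi_{L}) = \phi$.
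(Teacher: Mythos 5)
The paper itself gives no proof of this proposition; it is imported verbatim from Nikulin \cite{Ni}, where it rests on Eichler's spinor genus theory and strong approximation. Measured against that, your reduction to the orthogonal complement does not work, for two reasons. First, the step you call ``classical facts about lattices of maximal discriminant rank'' --- that a lattice $M$ with ${\rm rk\/}(M)=l(D_{M})$ satisfies $\mathcal{G}(M)=\{M\}$ and has surjective $r_{M}$ --- is false. There is no ``Hasse principle for isogenus classes'': the genus is \emph{by definition} the set of lattices that are everywhere locally isometric, and the whole difficulty is that local isometries cannot in general be glued to a global one (this failure is exactly what the class number of the genus measures). Your complement $M$ is typically positive or negative definite of small rank, and definite genera routinely contain several isometry classes; no relation between ${\rm rk\/}(M)$ and $l(D_{M})$ repairs this. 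So the central lemma of your reduction is unavailable.

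Second, even granting $(a)$, your appeal to ``Nikulin's uniqueness theorem for primitive embeddings of $M$ into $\Lambda$'' is circular: in \cite{Ni} that uniqueness statement is \emph{deduced from} the proposition you are trying to prove, applied to the orthogonal complement of $M$ --- which is precisely a lattice in $\mathcal{G}(L)$ satisfying the hypotheses at hand. All of the arithmetic content (one class per spinor genus for indefinite lattices of rank $\geq 3$, one spinor genus per genus under ${\rm rk\/}(L)\geq l(D_{L})+2$, and strong approximation to realize a prescribed element of $O(D_{L})$) is smuggled in through that citation. A sanity check that indefiniteness must enter in an essential, non-formal way: $E_{8}^{2}$ and $D_{16}^{+}$ are even unimodular of rank $16$ with $l(D)=0$, hence satisfy every hypothesis except indefiniteness, lie in one genus, and are not isometric; your argument applied to them (with $M=0$) would ``prove'' they are isometric, and the step at which it breaks is exactly the cited uniqueness theorem. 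To give an honest proof you would need to argue via spinor genera and strong approximation directly on $L$, not via a complement.
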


Let $L$ be a lattice of 
${\rm sign\/}(L)=(b_{+}, b_{-})$, $b_{+}>0$. 
We denote by $\Omega _{L}$ 
the set of oriented positive-definite $b_{+}$-planes in $L\otimes {\R}$, 
which is an open subset of the oriented Grassmannian 
and has two connected components.
A choice of a component of $\Omega _{L}$, 
which is equivalent to 
a choice of an orientation for a positive-definite $b_{+}$-plane in $L\otimes {\R}$, 
is sometimes called an 
{\it orientation\/} of $L$. 
For example, 
for a $K3$ surface $S$ 
the lattice $\widetilde{NS}(S)$ equipped with the Mukai pairing 
is of ${\rm sign\/}(\widetilde{NS}(S))=(2, \rho (S))$. 
We define 
the orientation of $\widetilde{NS}(S)$ 
so that 
${\R}(1, 0, -1)\oplus {\R}(0, H, 0)$ is of positive orientation, 
where $H\in NS(S)$ is an ample class.

For a subgroup 
$\Gamma \subset O(L)$, 
let $\Gamma ^{+}$ be the subgroup of $\Gamma $ 
consisting of orientation-preserving isometries in $\Gamma $. 
The group $\Gamma ^{+}$ 
is of index at most $2$ in $\Gamma $. 
We define the subsets 
$\mathcal{G}_{1}(L), \mathcal{G}_{2}(L) \subset \mathcal{G}(L)$ 
by 
\begin{eqnarray*}
& & \mathcal{G}_{1}(L):= \{ L'\in \mathcal{G}(L) \: |\: O(L')_{0}^{+} \ne O(L')_{0} \} , \\
& & \mathcal{G}_{2}(L):= \{ L'\in \mathcal{G}(L) \: |\: O(L')_{0}^{+} = O(L')_{0} \} .
\end{eqnarray*}  
We have the obvious decomposition 
$ \mathcal{G}(L)=\mathcal{G}_{1}(L)\sqcup \mathcal{G}_{2}(L)$. 
When $b_{+}$ is odd, 
we have 
$O(L)_{0}^{+} \ne O(L)_{0}$ 
if and only if 
$-{\rm id\/}_{D_{L}}$ is contained in $r_{L}(O(L)^{+})$.


\subsection{Twisted $K3$ surfaces}

Let $(S, \alpha )$ be a twisted $K3$ surface, 
where $\alpha $ is an element of the Brauer group ${\rm Br\/}(S)$.
Via the exponential sequence 
\begin{equation*}
0 \to Pic(S) \to H^{2}(S, {\Z}) \to H^{2}(\mathcal{O} _{S}) \to H^{2}(\mathcal{O} _{S}^{\times}) \to 1, 
\end{equation*} 
there is a canonical isomorphism 
${\rm Br\/}(S)\simeq {\rm Hom\/}(T(S), {\Q}/{\Z})$.
We identify 
a twisting $\alpha \in {\rm Br\/}(S)$ 
with 
a surjective homomorphism $\alpha : T(S)\to {\Z}/{\rm ord\/}(\alpha ){\Z}$, 
whose kernel is denoted by $T(S, \alpha )$. 
So 
the identity of ${\rm Br\/}(S)$ is denoted by $0$ 
and the inverse of $\alpha \in {\rm Br\/}(S)$ 
is denoted by $-\alpha $.

\begin{definition}
Set 
\begin{eqnarray*}
& & O_{Hodge}(T(S), \alpha ):= \{ g\in O_{Hodge}(T(S)), \: g^{\ast}\alpha =\alpha \} , \\
& & \Gamma (S, \alpha ):= r_{\widetilde{NS}(S)}^{-1} ( \lambda \circ r_{T(S)} (O_{Hodge}(T(S), \alpha ))), 
\end{eqnarray*} 
where 
$O_{Hodge}(T(S))$ is the group of the Hodge isometries of $T(S)$, 
and 
$\lambda : O(D_{T(S)})\stackrel{\simeq}{\to} O(D_{\widetilde{NS}(S)})$ 
is the isomorphism induced from the isometry 
$\lambda _{\widetilde{H}(S, {\Z})} : (D_{T(S)}, q) \simeq (D_{\widetilde{NS}(S)}, -q)$. 
\end{definition}

From the inclusions 
$T(S, \alpha ) \subset T(S) \subset T(S, \alpha )^{\vee}$, 
we have the isomorphism 
\[
O_{Hodge}(T(S), \alpha ) \simeq 
\Bigl\{ 
g \in O_{Hodge}(T(S, \alpha )), \: \:
r(g)(\alpha ^{-1}(\bar{1})) = \alpha ^{-1}(\bar{1}) \; \in D_{T(S, \alpha )} 
\Bigr\} , 
\]
where $\bar{1} \in {\Z}/{\rm ord\/}(\alpha ){\Z}$. 
In generic case, 
$O_{Hodge}(T(S), \alpha )= \{ {\rm id\/} \}$ 
if ${\rm ord\/}(\alpha ) \geq 3$
and 
$O_{Hodge}(T(S), \alpha )= \{ \pm {\rm id\/} \}$ 
if ${\rm ord\/}(\alpha ) \leq 2$. 

We need the following corollary of Huybrechts-Stellari's theorem.

\begin{prop}[\cite{Ca1}, \cite{H-S1}, \cite{Yo}, \cite{H-S2}]\label{H-S}
Let $S$ be a $K3$ surface 
and let $(S', \alpha ')$ be a twisted $K3$ surface. 
Then 
there exists an equivalence $D^{b}(S', \alpha ') \simeq D^{b}(S)$ 
if and only if 
there exists a Hodge isometry $T(S)\simeq T(S', \alpha ')$.
\end{prop}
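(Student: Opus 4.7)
The strategy is to derive the statement from the twisted derived Torelli theorem of Huybrechts--Stellari (built on Caldararu's conjecture and Yoshioka's moduli-theoretic input), which asserts that $D^{b}(X,\alpha)\simeq D^{b}(Y,\beta)$ for twisted $K3$ surfaces $(X,\alpha)$, $(Y,\beta)$ if and only if there exists an orientation-preserving Hodge isometry of twisted Mukai lattices $\widetilde{H}(X,\alpha,\mathbb{Z})\simeq \widetilde{H}(Y,\beta,\mathbb{Z})$. Granted this, the question reduces to shuttling between Hodge isometries of transcendental sublattices and those of the ambient Mukai lattices.

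For the ``only if'' direction, I would apply the theorem above to the equivalence $D^{b}(S',\alpha')\simeq D^{b}(S)$ to obtain an orientation-preserving Hodge isometry $\Phi\colon \widetilde{H}(S,\mathbb{Z})\to\widetilde{H}(S',\alpha',\mathbb{Z})$ and restrict it. Because the transcendental lattice of a (twisted) $K3$ is intrinsically characterized as the smallest primitive sublattice of its Mukai lattice whose complexification contains the $(2,0)$-line, $\Phi$ preserves the splitting into transcendental and algebraic parts and yields the desired Hodge isometry $T(S)\simeq T(S',\alpha')$.

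For the ``if'' direction, the task is to extend a given Hodge isometry $\varphi\colon T(S)\to T(S',\alpha')$ to an orientation-preserving Hodge isometry of the full Mukai lattices, and then invoke the cited theorem. Set $N:=T(S)^{\perp}\subset\widetilde{H}(S,\mathbb{Z})$ and $N':=T(S',\alpha')^{\perp}\subset\widetilde{H}(S',\alpha',\mathbb{Z})$; both ambient lattices are abstractly isometric to the unimodular $\widetilde{\Lambda}_{K3}$. Proposition~\ref{Nikulin1}(1) identifies $(D_{N},q)\simeq(D_{T(S)},-q)$ and similarly for $N'$, so $\varphi_{\ast}$ together with the matching signatures places $N$ and $N'$ in a common genus. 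I would then apply Proposition~\ref{Nikulin1}(2) to glue $\varphi$ with a compatible isometry $\psi\colon N\to N'$, yielding an extension $\Phi=\varphi\oplus\psi$ of the Mukai lattices; this $\Phi$ is automatically a Hodge isometry since $N$ and $N'$ lie in the $(1,1)$-parts of the respective Hodge structures. Orientation-preservation can then be arranged by post-composing $\psi$ with a reflection in a norm-$2$ vector of $N'$ (such a vector exists because $N'$ contains a hyperbolic summand coming from the Mukai pairing): this reflection fixes $D_{N'}$ pointwise, hence does not disturb the gluing with $\varphi$, while inverting the orientation of the positive $2$-plane of $N'$ and therefore of the positive $4$-plane of $\widetilde{H}(S',\alpha',\mathbb{Z})$.

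The main obstacle is the extension step: $N$ and $N'$ are only guaranteed to be isogenus, not isomorphic, so producing $\psi$ as a lattice isometry is not immediate when the genus contains several classes. The resolution is to exploit the flexibility afforded by the extra hyperbolic summand of $\widetilde{\Lambda}_{K3}=U\oplus\Lambda_{K3}$: one adjusts the primitive embedding of $T(S',\alpha')$ into $\widetilde{\Lambda}_{K3}$ within its $O(\widetilde{\Lambda}_{K3})$-orbit so that the complement becomes isomorphic to $N$, after which the gluing argument above proceeds verbatim.
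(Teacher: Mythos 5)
Your overall strategy coincides with the paper's: reduce to the Huybrechts--Stellari twisted derived Torelli theorem and extend the given Hodge isometry of transcendental lattices to an orientation-preserving Hodge isometry of Mukai lattices via Nikulin's gluing. However, the step you yourself identify as the main obstacle is not actually resolved by your argument. You propose to ``adjust the primitive embedding of $T(S',\alpha')$ into $\widetilde{\Lambda}_{K3}$ within its $O(\widetilde{\Lambda}_{K3})$-orbit so that the complement becomes isomorphic to $N$,'' but the isometry class of the orthogonal complement is constant along an $O(\widetilde{\Lambda}_{K3})$-orbit, so moving within the orbit changes nothing; and in any case the embedding $T(S',\alpha')\subset\widetilde{H}(S',\alpha',\mathbb{Z})$ is dictated by the geometry and is not yours to adjust. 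Moreover, even granting $N\simeq N'$, Proposition~\ref{Nikulin1}(2) only produces the extension once you exhibit $\psi\in O(N,N')$ whose action on $D_{N}$ matches $r(\varphi)$ under $\lambda$; you assert a ``compatible'' $\psi$ exists without justification, and this is precisely the surjectivity of $O(N)\to O(D_{N})$, which fails for general lattices. Finally, your orientation fix presupposes that $N'=\widetilde{NS}(S',B')$ contains a hyperbolic summand ``coming from the Mukai pairing''; for a genuinely twisted $(S',\alpha')$ this is false a priori (only $(0,0,1)$, not $(1,0,0)$, is guaranteed to be algebraic after the B-field shift).

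The missing idea, which is exactly what the paper uses and which makes the asymmetry of the statement ($S$ untwisted, $(S',\alpha')$ twisted) essential, is that $N=\widetilde{NS}(S)=U\oplus NS(S)$ contains a hyperbolic plane because $S$ carries no twist. Hence ${\rm rk}(N)\geq l(D_{N})+2$, and Proposition~\ref{Nikulin2} yields simultaneously that the genus of $N$ is a single isometry class (so $N'\simeq N$, since the two are isogenus by Proposition~\ref{Nikulin1}(1)) and that $r_{N}\colon O(N)\to O(D_{N})$ is surjective (so a compatible $\psi$ exists). With $N'\simeq U\oplus NS(S)$ established, the orientation can then be corrected, most simply as in the paper by composing with $-{\rm id}_{H^{0}(S)\oplus H^{4}(S)}\oplus{\rm id}_{H^{2}(S)}$ on the untwisted side, which acts trivially on $T(S)$. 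One further small point: for the ``only if'' direction you implicitly assume every equivalence induces a Hodge isometry of Mukai lattices; the paper secures this by citing Canonaco--Stellari's result that every such equivalence is of Fourier--Mukai type.
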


\begin{proof}
By the theorem of Canonaco-Stellari (\cite{C-S}), 
every equivalence between derived categories of twisted $K3$ surfaces 
is of Fourier-Mukai type. 
Thus only "if" part requires an explanation.

Assume that 
there is a Hodge isometry $T(S)\simeq T(S', \alpha ')$. 
Let $B'\in H^{2}(S', {\Q})$ be a B-field lift of $\alpha '$.
Since there is a Hodge isometry 
$T(S', \alpha ')\simeq T(S', B')$, 
we obtain a Hodge isometry $g: T(S)\simeq T(S', B')$. 
By Proposition \ref{Nikulin1}, 
$\widetilde{NS}(S)$ 
and 
$\widetilde{NS}(S', B')$ 
are isogenus. 
Because 
$\widetilde{NS}(S)$ admits an embedding of the hyperbolic plane $U$, 
it follows from Proposition \ref{Nikulin2} that 
$g$ 
extends to a Hodge isometry 
$\widetilde{\Phi} : \widetilde{H}(S, {\Z}) \simeq \widetilde{H}(S', B', {\Z})$. 
Composing $\widetilde{\Phi}$   
with the isometry 
$-{\rm id\/}_{H^{0}(S)+H^{4}(S)}\oplus {\rm id\/}_{H^{2}(S)}$ 
if necessary, 
we may assume that 
$\widetilde{\Phi}$ is orientation-preserving. 
Now we have 
$D^{b}(S', \alpha ') \simeq D^{b}(S)$ 
by Huybrechts-Stellari's theorem. 
\end{proof}

When $\alpha '=0$, 
Proposition \ref{H-S} is 
Mukai-Orlov's theorem (\cite{Mu}, \cite{Or}).

\begin{definition}
For a twisted $K3$ surface $(S, \alpha )$, set
\[
{\rm FM\/}^{d}(S, \alpha ) := 
\Bigl\{  \: 
(S', \alpha ') \: : {\rm twisted\/} \: K3 \: {\rm surface\/} ,\: 
D^{b}(S', \alpha ')\simeq D^{b}(S, \alpha ),\: 
{\rm ord\/}(\alpha ')=d \: \Bigr\} / \simeq .
\]
A member $(S', \alpha ')\in {\rm FM\/}^{d}(S, \alpha )$
is called a 
{\it twisted Fourier-Mukai\/} (FM) partner of $(S, \alpha )$. 
It is easily seen that 
${\rm FM\/}^{d}(S, \alpha )$ is empty 
unless 
$d^{2}$ divides ${\rm det\/}(T(S, \alpha ))$.
\end{definition}

\begin{definition}
Set
\[
\mathcal{FM}^{d}(S, \alpha ) := {\rm FM\/}^{d}(S, \alpha ) / \sim ,
\]
where 
two twisted FM partners 
$(S_{1}, \alpha _{1})$, 
$(S_{2}, \alpha _{2}) \in {\rm FM\/}^{d}(S, \alpha )$ 
are equivalent 
if there exists a Hodge isometry 
$g : T(S_{1})\simeq T(S_{2}) $ 
with 
$g^{\ast }\alpha _{2}=\alpha _{1}$. 
Denote by 
\[
\pi : {\rm FM\/}^{d}(S, \alpha ) \twoheadrightarrow \mathcal{FM}^{d}(S, \alpha ) 
\]
the quotient map.
\end{definition}

We remark that 
for a twisted $K3$ surface $(S, \alpha )$ 
with a B-field lift $B\in H^{2}(S, {\Q})$, 
the period of $S$ is determined by the natural Hodge isometry 
\[
H^{2}(S, {\Z})\simeq 
\Bigl( (0, 0, 1)^{\perp}\cap \widetilde{H}(S, B, {\Z})\Bigr) / {\Z}(0, 0, 1) .
\]

In Section 5, 
we shall calculate twistings in terms of discriminant group.

\begin{lemma}\label{twist and disc form}
Let $(S, \alpha )$ be a twisted $K3$ surface with a B-field lift $B\in H^{2}(S, {\Q})$. 
We denote 
$d:={\rm ord\/}(\alpha )$
and 
$\lambda := \lambda_{ \widetilde{H}(S, B, {\Z})}: 
(D_{\widetilde{NS}(S, B)}, q) \simeq (D_{T(S, B)}, -q)$. 
Then 
$(0, 0, \frac{-1}{d}) \in \widetilde{NS}(S, B)^{\vee}$ 
and 
we have 
the Hodge isometry 
\begin{equation}\label{eqn: exp(B) and disc form}
{\rm e\/}^{B} := {\rm exp\/}(B)\wedge : 
T(S) 
\stackrel{\simeq}{\to} 
\Bigl\langle 
\lambda ((0, 0, \frac{-1}{d})), \: T(S, B) 
\Bigr\rangle . 
\end{equation} 
The twisting 
$\alpha $ 
is given by 
\begin{equation}\label{eqn: twisting and disc form}
T(S) 
\stackrel{{\rm e\/}^{B}}{\to} 
{\rm e\/}^{B}(T(S))/T(S, B) 
\simeq 
\Bigl\langle 
\lambda ((0, 0, \frac{-1}{d})) 
\Bigr\rangle 
\simeq 
{\Z}/d{\Z} .
\end{equation}
\end{lemma}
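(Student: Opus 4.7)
The plan is to work everything out coordinate-wise in $\widetilde{H}(S, \Q) = H^{0}(S, \Q) \oplus H^{2}(S, \Q) \oplus H^{4}(S, \Q)$, using the rational orthogonal decomposition $H^{2}(S, \Q) = NS(S) \otimes \Q \oplus T(S) \otimes \Q$ and the corresponding splitting $B = B_{NS} + B_{T}$ of the $B$-field. The essential input is that since $\alpha(t) = (B, t) \bmod \Z$ and ${\rm ord}(\alpha) = d$, the transcendental part $B_{T}$ satisfies $dB_{T} \in T(S)^{\vee}$ with $d$ minimal; this integrality condition is what governs the sublattices $\widetilde{NS}(S, B)$ and $T(S, B)$ of $\widetilde{H}(S, B, \Z)$.

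First I would describe $\widetilde{NS}(S, B)$ explicitly: a vector $v = (r, c, s) \in \widetilde{H}(S, B, \Z)$ is of $B$-twisted Hodge type $(1,1)$ iff it is orthogonal to $e^{B}\sigma = (0, \sigma, (B, \sigma))$ for a generator $\sigma$ of $H^{2,0}(S)$, which reduces to $c_{T} = rB_{T}$ in $T(S) \otimes \Q$. Since $c_{T}$ lies in $T(S)^{\vee}$, this forces $d \mid r$, so the Mukai pairing gives $((0, 0, -\frac{1}{d}), v) = r/d \in \Z$, establishing $(0, 0, -\frac{1}{d}) \in \widetilde{NS}(S, B)^{\vee}$. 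A parallel computation of $\widetilde{NS}(S, B)^{\perp} \cap \widetilde{H}(S, B, \Z)$ yields $T(S, B) = \{(0, c, (B, c)) \mid c \in T(S, \alpha)\}$, with the middle-coordinate projection a natural isometry $T(S, B) \simeq T(S, \alpha)$.

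By construction, $e^{B} = \exp(B) \wedge$ is a rational Hodge isometry of $\widetilde{H}(S, \Q)$ from the untwisted to the $B$-twisted structure, and its restriction to $T(S)$ sends $t \mapsto (0, t, (B, t))$. Comparing with the explicit form of $T(S, B)$ above, $e^{B}(T(S))$ contains $T(S, B)$ with quotient $e^{B}(T(S))/T(S, B) \simeq T(S)/T(S, \alpha) \simeq \Z/d\Z$, and a direct pairing check places $e^{B}(T(S))$ inside $T(S, B)^{\vee}$. The remaining point is to identify this cyclic quotient with $\langle \lambda((0, 0, -\frac{1}{d})) \rangle \subset D_{T(S, B)}$. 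By the gluing construction of $\lambda$ in Proposition \ref{Nikulin1}, $\lambda((0, 0, -\frac{1}{d}))$ is represented by the unique $w \in T(S, B)^{\vee}$ (modulo $T(S, B)$) such that $(0, 0, -\frac{1}{d}) + w \in \widetilde{H}(S, B, \Z)$. Using $T(S, B) \otimes \Q = \{(0, c, (B, c)) \mid c \in T(S) \otimes \Q\}$, solving this integrality condition produces $w = (0, t_{0}, (B, t_{0})) = e^{B}(t_{0})$ with $t_{0} \in T(S)$ satisfying $\alpha(t_{0}) \equiv \frac{1}{d} \bmod \Z$. This gives (\ref{eqn: exp(B) and disc form}); and since the class of $t_{0}$ in $T(S)/T(S, \alpha)$ corresponds both to $1 \in \Z/d\Z$ via $\alpha$ and to $\lambda((0, 0, -\frac{1}{d}))$ via $e^{B}$, the composition (\ref{eqn: twisting and disc form}) coincides with $\alpha$. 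The main technical step is the explicit computation of $\lambda((0, 0, -\frac{1}{d}))$ from the gluing of $\widetilde{NS}(S, B)$ and $T(S, B)$ inside the unimodular lattice $\widetilde{H}(S, B, \Z)$; after that, identifying the twisting with $\alpha$ is just tracking a single generator.
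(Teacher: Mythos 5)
Your proposal is correct and follows essentially the same route as the paper: both hinge on choosing $t_{0}\in T(S)$ with $\alpha(t_{0})=\bar{1}$, observing that $e^{B}(t_{0})=(0,t_{0},(B,t_{0}))$ differs from $(0,0,\frac{1}{d})$ by an integral class, and reading off $\lambda((0,0,\frac{-1}{d}))=e^{B}(t_{0})$ from the gluing description of $\lambda$. The only (harmless) divergence is in the first claim: you establish $(0,0,\frac{-1}{d})\in\widetilde{NS}(S,B)^{\vee}$ by computing $\widetilde{NS}(S,B)$ explicitly and showing $d\mid r$, whereas the paper deduces it more quickly from $e^{B}(t_{0})\perp\widetilde{NS}(S,B)$ together with the integrality of $e^{B}(t_{0})-(0,0,\frac{1}{d})$.
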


\begin{proof}
Since  
$\alpha : T(S)\to {\Z}/d{\Z}$ 
is surjective, 
we can take a transcendental cycle 
$l\in T(S)$ 
such that 
$(l, B) = \frac{1}{d} + k$ with $k\in {\Z}$. 
Setting 
$l' := {\rm e\/}^{B}(l)=l+(0, 0, \frac{1}{d}+k) \in T(S, B)^{\vee}$, 
we have 
${\rm e\/}^{B}:T(S)\simeq \langle l', \: T(S, B)\rangle $. 
Since  
$l'-(0, 0, \frac{1}{d}) \in \widetilde{H}(S, B, {\Z})$, 
it follows that 
$((0, 0, \frac{-1}{d}), m)\in {\Z}$ 
for all $m\in \widetilde{NS}(S, B)$. 
By the relation 
$\lambda ((0, 0, \frac{-1}{d}))=l' \in D_{T(S, B)}$, 
we have (\ref{eqn: exp(B) and disc form}). 
Then (\ref{eqn: twisting and disc form}) 
follows from the relation 
$\alpha (l)=\bar{1} \in {\Z}/d{\Z}$. 
\end{proof}

More precisely, 
the divisibility of 
$(0, 0, 1) \in \widetilde{NS}(S, B)$ 
is equal to $d$, 
but 
we do not need this fact in the following.

\section{Lattice-theoretic descriptions}

\subsection{Isotropic elements of the discriminant form}

Let 
$(S, \alpha )$ 
be a twisted $K3$ surface. 
We write $T:=T(S, \alpha )$, 
which is an even lattice of ${\rm sign\/}(T)=(2, 20-\rho (S))$ 
equipped with a period. 
For a twisted FM partner 
$(S_{1}, \alpha _{1})\in {\rm FM\/}^{d}(S, \alpha )$ 
there exists a Hodge isometry 
$g_{1}:T(S_{1}, \alpha _{1})\simeq T$ 
by Huybrechts-Stellari's theorem (\cite{H-S1}).  
Then 
$g_{1}$ and $\alpha _{1}$ 
induce an isomorphism 
$g_{1}(T(S_{1}))/T \stackrel{\simeq}{\to} {\Z}/d{\Z}$. 
The subgroup
$g_{1}(T(S_{1}))/T \subset D_{T}$ 
is an isotropic cyclic group of order$=d$.

\begin{definition}
Define the map 
\[
\mu : \mathcal{FM}^{d}(S, \alpha ) \to O_{Hodge}(T)\backslash I^{d}(D_{T})
\]
by 
$\mu ([(S_{1}, \alpha _{1})]):=[g_{1}(\alpha _{1}^{-1}(\bar{1}))]$. 
\end{definition}

\begin{lemma}\label{injective 1}
The map $\mu $ is well-defined and is injective.
\end{lemma}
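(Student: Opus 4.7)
The plan is to first verify that $g_{1}(\alpha_{1}^{-1}(\bar{1}))$ genuinely lies in $I^{d}(D_{T})$ and that its $O_{Hodge}(T)$-orbit is independent of both the Hodge isometry $g_{1}$ and the representative $(S_{1},\alpha_{1})$; then I would prove injectivity by reconstructing an equivalence $\sim$ from a coincidence of orbits. For well-definedness, I would note that $\alpha_{1}^{-1}(\bar{1}) \in T(S_{1}) \subset T(S_{1},\alpha_{1})^{\vee}$ has exact order $d$ modulo $T(S_{1},\alpha_{1})$ and is isotropic in $D_{T(S_{1},\alpha_{1})}$ because $T(S_{1})$ is an even integral overlattice of $T(S_{1},\alpha_{1})$; the $\Q$-linear extension of $g_{1}$ then carries it to an element of $I^{d}(D_{T})$. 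A second Hodge isometry $g_{1}':T(S_{1},\alpha_{1})\simeq T$ differs from $g_{1}$ by an element of $O_{Hodge}(T)$, so the orbit is independent of this choice. If $f:T(S_{1})\simeq T(S_{2})$ is a Hodge isometry realising the equivalence $(S_{1},\alpha_{1})\sim(S_{2},\alpha_{2})$, then $f$ restricts to $T(S_{1},\alpha_{1})\simeq T(S_{2},\alpha_{2})$, and the condition $f^{*}\alpha_{2}=\alpha_{1}$ translates to $f(\alpha_{1}^{-1}(\bar{1})) \equiv \alpha_{2}^{-1}(\bar{1})$ modulo $T(S_{2},\alpha_{2})$, so composing with $g_{2}$ yields the same class in $D_{T}$ as the one obtained from $g_{1}$.

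For injectivity, I would assume $\mu([(S_{1},\alpha_{1})]) = \mu([(S_{2},\alpha_{2})])$ and pick $h \in O_{Hodge}(T)$ with $h(g_{1}(\alpha_{1}^{-1}(\bar{1}))) \equiv g_{2}(\alpha_{2}^{-1}(\bar{1}))$ modulo $T$. Then $\phi := g_{2}^{-1} h g_{1}$ is a Hodge isometry $T(S_{1},\alpha_{1}) \simeq T(S_{2},\alpha_{2})$; its $\Q$-linear extension sends $\alpha_{1}^{-1}(\bar{1})$ to an element of $T(S_{2},\alpha_{2})^{\vee}$ that is congruent to $\alpha_{2}^{-1}(\bar{1})$ modulo $T(S_{2},\alpha_{2})$, hence actually lies in $T(S_{2})$. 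Since $T(S_{i})$ is generated over $T(S_{i},\alpha_{i})$ by the element $\alpha_{i}^{-1}(\bar{1})$, this shows that $\phi$ restricts to a Hodge isometry $T(S_{1})\simeq T(S_{2})$ satisfying $\alpha_{2}\circ\phi = \alpha_{1}$, which is precisely the equivalence $(S_{1},\alpha_{1})\sim(S_{2},\alpha_{2})$.

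The main technical point, and the only one that really requires care, is the dictionary between the overlattice inclusion $T(S_{i},\alpha_{i})\subset T(S_{i})$ and the marked isotropic cyclic subgroup of $D_{T(S_{i},\alpha_{i})}$ that it produces: because $\alpha_{i}$ is recovered from the distinguished generator $\alpha_{i}^{-1}(\bar{1})$ of this subgroup, a Hodge isometry $\phi$ between the kernels satisfies $\phi^{*}\alpha_{2}=\alpha_{1}$ if and only if it carries one marked generator to the other modulo the kernels. Once this correspondence is laid out, no Nikulin-style extension to the full $K3$ lattice is needed here, and both well-definedness and injectivity drop out symmetrically from the same bookkeeping.
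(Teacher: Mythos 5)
Your proof is correct and follows essentially the same route as the paper's: well-definedness from the $O_{Hodge}(T)$-ambiguity of $g_{1}$ together with the fact that an equivalence-realizing isometry restricts to the kernels and matches the marked generators, and injectivity by extending $\varphi = g_{2}^{-1} h g_{1}$ across the overlattices $T(S_{i},\alpha_{i}) \subset T(S_{i})$ determined by the isotropic cyclic subgroups. You merely spell out a few details the paper leaves implicit (the isotropy of $\alpha_{1}^{-1}(\bar{1})$ from the evenness of the overlattice, and the generator bookkeeping giving $\widetilde{\varphi}^{\ast}\alpha_{2} = \alpha_{1}$), so no substantive difference.
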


\begin{proof}
The ambiguity of the choice of a Hodge isometry 
$g_{1}: T(S_{1}, \alpha _{1})\simeq T$ 
comes from the action of $O_{Hodge}(T)$ 
so that 
the class  
$[g_{1}(\alpha _{1}^{-1}(\bar{1}))] \in O_{Hodge}(T)\backslash I^{d}(D_{T})$ 
is well-defined from the partner $(S_{1}, \alpha _{1})$.
Since 
the map $\mu $ is defined 
in terms of $T(S_{1})$ and $\alpha _{1}$, 
it is clear that 
$\mu $ is well-defined as a map from $\mathcal{FM}^{d}(S, \alpha )$.

We prove the injectivity of $\mu $. 
For two partners 
$(S_{i}, \alpha _{i}) \in {\rm FM\/}^{d}(S, \alpha )$, 
$i=1, 2$,  
and Hodge isometries 
$g_{i}:T(S_{i}, \alpha _{i})\simeq T$,
write 
$x_{i}:=g_{i}(\alpha _{i}^{-1}(\bar{1})) \in I^{d}(D_{T})$. 
Assume  
the existence of a Hodge isometry 
$\varphi \in O_{Hodge}(T)$ such that 
$x_{2}=r(\varphi )(x_{1})$. 
Since 
$r(\varphi )(\langle x_{1} \rangle )=\langle x_{2} \rangle \subset D_{T}$, 
$\varphi $ extends to the Hodge isometry 
$\widetilde{\varphi} : T(S_{1})\simeq T(S_{2})$. 
By the equality 
$r(\varphi ) ( g_{1}(\alpha _{1}^{-1}(\bar{1})) ) = g_{2}(\alpha _{2}^{-1}(\bar{1}))$ 
we have 
$\widetilde{\varphi} ^{\ast}\alpha _{2}=\alpha _{1}$. 
Hence 
$[(S_{1}, \alpha _{1})]=[(S_{2}, \alpha _{2})]\in \mathcal{FM}^{d}(S, \alpha )$.
\end{proof}

\begin{prop}\label{bijective if untwisted 1}
If 
the twisted $K3$ surface $(S, \alpha )$ 
has an untwisted FM partner,  
then the map $\mu $ is bijective.
\end{prop}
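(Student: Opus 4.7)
Injectivity has been established in Lemma \ref{injective 1}, so the task is to prove surjectivity. Let $S_{0}$ be an untwisted FM partner of $(S, \alpha )$. By Proposition \ref{H-S} there is a Hodge isometry $T(S_{0}) \simeq T(S, \alpha ) = T$, and the derived equivalence $D^{b}(S_{0}) \simeq D^{b}(S, \alpha )$ identifies ${\rm FM\/}^{d}(S, \alpha )$ with ${\rm FM\/}^{d}(S_{0})$ compatibly with the respective maps $\mu $. Hence it suffices to prove surjectivity for the untwisted $K3$ surface $S_{0}$: for every $x \in I^{d}(D_{T})$ we must produce a twisted FM partner of $S_{0}$ mapping to $[x]$.

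Fix an embedding $T \oplus \widetilde{NS}(S_{0}) \subset \widetilde{\Lambda }_{K3}$ and let $\lambda _{0}: D_{T} \stackrel{\simeq}{\to} -D_{\widetilde{NS}(S_{0})}$ be the gluing isomorphism of Proposition \ref{Nikulin1}. Set $y := \lambda _{0}(x) \in I^{d}(D_{\widetilde{NS}(S_{0})})$. The central step is to produce a primitive isotropic vector $v \in \widetilde{NS}(S_{0})$ of divisibility exactly $d$ with $v/d \equiv y$ in $D_{\widetilde{NS}(S_{0})}$. Decomposing $\widetilde{NS}(S_{0}) = U \oplus NS(S_{0})$ via the hyperbolic plane $U = H^{0}(S_{0}, \Z) \oplus H^{4}(S_{0}, \Z)$, we have $D_{\widetilde{NS}(S_{0})} = D_{NS(S_{0})}$; lifting $y$ to $\tilde{y} \in NS(S_{0})^{\vee}$ with $(\tilde{y}, \tilde{y}) = 2m \in 2\Z$, the vector
\begin{equation*}
v := (d, -md, d\tilde{y}) \in U \oplus NS(S_{0})
\end{equation*}
is readily verified to be integral, primitive, isotropic and of divisibility exactly $d$, with $v/d \equiv y$ in $D_{\widetilde{NS}(S_{0})}$.

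Set $(S', \alpha ') := (M_{v}, \alpha _{M_{v}})$, the coarse moduli space of Gieseker-stable sheaves on $S_{0}$ with Mukai vector $v$, equipped with the obstruction $\alpha _{M_{v}} \in {\rm Br\/}(M_{v})$ to the existence of a universal sheaf. By the theorems of Mukai and Yoshioka recalled as point $(1)$ of the twisted list in Section 1, $M_{v}$ is a projective $K3$ surface, $(M_{v}, \alpha _{M_{v}})$ is a twisted FM partner of $S_{0}$, and ${\rm ord\/}(\alpha _{M_{v}})$ equals the divisibility of $v$, namely $d$. The induced derived equivalence yields a Hodge isometry $\widetilde{\Phi}: \widetilde{H}(S_{0}, \Z) \simeq \widetilde{H}(M_{v}, B_{M_{v}}, \Z)$ sending $v \mapsto (0, 0, 1)$; hence $v/d$ corresponds to $(0, 0, 1/d)$ in the twisted Mukai dual of $M_{v}$. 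Applying Lemma \ref{twist and disc form} to $(M_{v}, \alpha _{M_{v}})$ and transporting the conclusion back to $S_{0}$ via $\widetilde{\Phi}$, the class of $\alpha _{M_{v}}^{-1}(\bar{1})$ in $D_{T}$ equals $\pm \lambda _{0}^{-1}(y) = \pm x$; both signs lie in the same $O_{Hodge}(T)$-orbit since $-{\rm id\/} \in O_{Hodge}(T)$, so $\mu ([(M_{v}, \alpha _{M_{v}})]) = [x]$.

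The main obstacle is the lattice-theoretic construction of the Mukai vector $v$: the conditions of primitivity, divisibility $d$, isotropy and the prescribed class in $D_{\widetilde{NS}(S_{0})}$ must all be arranged simultaneously, and it is precisely the unimodular summand $U \subset \widetilde{NS}(S_{0})$ that provides the freedom to do so.
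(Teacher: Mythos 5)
Your proof is correct in substance but follows a genuinely different route from the paper's. After the common reduction to the untwisted case, the paper never touches moduli spaces: it forms the overlattices $\widetilde{M}_{x}=\langle \lambda (x), \widetilde{NS}(S)\rangle$ and $T_{x}=\langle x, T(S)\rangle$, glues them into $\widetilde{\Lambda}_{K3}$ via the induced isometry of discriminant forms, picks an embedding $U\hookrightarrow \widetilde{M}_{x}$, and realizes the orthogonal complement as $H^{2}$ of an actual $K3$ surface by the surjectivity of the period map, pulling back $\alpha _{x}$ and invoking Proposition \ref{H-S} to see that the result is a partner. You instead build the partner geometrically as a coarse moduli space $M_{v}$ on $S_{0}$ for a Mukai vector $v$ of divisibility $d$ whose class $v/d$ in $D_{\widetilde{NS}(S_{0})}$ is prescribed to be $\lambda _{0}(x)$ --- exactly the strategy the paper reserves for Section 5 in the Picard number $1$ case. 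Your route buys an explicit geometric model of the partner, at the cost of importing the moduli-theoretic package (nonemptiness for a primitive isotropic positive-rank $v$, a polarization chosen generic with respect to $v$ so that $M_{v}$ is a $K3$ surface, and C\u ald\u araru's identification of ${\rm ord\/}(\alpha _{M_{v}})$ with the divisibility of $v$); the paper's route is purely lattice/Hodge-theoretic and leans only on Proposition \ref{H-S}, which already encapsulates the hard geometry. Three small points to tidy: your vector written as $(d, -md, d\tilde{y})$ is isotropic only under the identification $(1,0,0)\mapsto e$, $(0,0,-1)\mapsto f$ used in Section 3.2 (as a literal Mukai vector in $H^{0}\oplus NS\oplus H^{4}$ it should read $(d, d\tilde{y}, md)$, since $((1,0,0),(0,0,1))=-1$ for the Mukai pairing); you should say explicitly that the polarization is chosen $v$-generic; and the final sign ambiguity is indeed harmless because $-{\rm id\/}$ always lies in $O_{Hodge}(T)$, as you note.
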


\begin{proof}
It suffices to show the surjectivity. 
Let 
$S'$ be an untwisted FM partner of $(S, \alpha )$. 
Since there exists a Hodge isometry 
$T(S')\simeq T$, 
we may assume from the first that 
$(S, \alpha )$ itself is untwisted. 
Take an isotropic element 
$x\in I^{d}(D_{T}) = I^{d}(D_{T(S)})$. 
Via the isometry 
$\lambda : (D_{T(S)}, q)\simeq (D_{\widetilde{NS}(S)}, -q)$, 
we obtain an isotropic element 
$\lambda (x)\in I^{d}(D_{\widetilde{NS}(S)})$. 
Set 
\begin{eqnarray*}
& &\widetilde{M}_{x}:=\langle \lambda (x), \: \widetilde{NS}(S) \rangle \: \subset \widetilde{NS}(S)^{\vee} , \\
& &T_{x}:=\langle x, \: T(S) \rangle \: \subset T(S)^{\vee} ,
\end{eqnarray*}
which are even overlattices of 
$\widetilde{NS}(S)$, $T(S)$ respectively.

Via the isometry 
\[
(D_{T_{x}}, q) \simeq 
(\langle x \rangle ^{\perp} /\langle x \rangle , q) \simeq 
(\langle \lambda (x) \rangle ^{\perp} /\langle \lambda (x) \rangle , -q) \simeq 
(D_{\widetilde{M}_{x}}, -q),  
\]
we obtain an embedding 
$\widetilde{M}_{x}\oplus T_{x}\hookrightarrow \widetilde{\Lambda}_{K3}$ 
with 
both $\widetilde{M}_{x}$ and $T_{x}$ embedded primitively. 
Since 
$\widetilde{NS}(S) \subset \widetilde{M}_{x}$, 
the lattice 
$\widetilde{M}_{x}$ 
admits an embedding 
$\varphi $ of the hyperbolic plane $U$. 
Then the lattice 
$\Lambda _{\varphi}:=\varphi (U)^{\perp}\cap \widetilde{\Lambda}_{K3}$ 
is isometric to the $K3$ lattice $\Lambda _{K3}$ 
and has the period induced from $T_{x}$. 
Now 
the surjectivity of the period map (\cite{To}, \cite{K3}) 
assures the existences of 
a $K3$ surface $S_{\varphi}$ 
and a Hodge isometry 
$\Phi : H^{2}(S_{\varphi}, {\Z}) \stackrel{\simeq}{\to} \Lambda _{\varphi}$. 
Pulling back the homomorphism 
\[
\alpha _{x}: 
T_{x} \to T_{x}/T(S) 
\simeq \langle x\rangle \simeq {\Z}/d{\Z}, \: \: \: 
\alpha _{x}(x)=\bar{1}
\]
by 
$\Phi |_{T(S_{\varphi})}$, 
we obtain a twisted $K3$ surface 
$(S_{\varphi}, \alpha _{\varphi})$.

Since there exists a Hodge isometry 
$T(S_{\varphi}, \alpha _{\varphi}) \simeq {\rm Ker\/}(\alpha _{x}) = T(S)$, 
it follows from Proposition \ref{H-S} that   
$(S_{\varphi}, \alpha _{\varphi}) 
\in {\rm FM\/}^{d}(S)$.  
By the construction, 
we have 
$\mu ([(S_{\varphi}, \alpha _{\varphi})])=[x]$.
\end{proof}

In general, 
there is a condition on the image of $\mu $. 
Let 
$B\in H^{2}(S, {\Q})$ be a B-field lift of $\alpha \in {\rm Br\/}(S)$. 
There is a natural isometry 
$\lambda : (D_{T}, q) \simeq (D_{\widetilde{NS}(S, B)}, -q)$.
We define 
\begin{equation}\label{eqn:J^{d}(D)}
J^{d}(D_{T}):= 
\{ x\in I^{d}(D_{T}), \: \: 
{\rm Emb\/}(U, \langle \lambda (x), \widetilde{NS}(S, B) \rangle ) 
\ne \phi \} .
\end{equation}
One can verify that 
the set $J^{d}(D_{T})$ is 
independent of the choice of a lift $B$.

\begin{prop}
We have the inclusion 
\[
{\rm Im\/}(\mu )\subset O_{Hodge}(T)\backslash J^{d}(D_{T}).
\]
\end{prop}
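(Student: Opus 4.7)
The plan is to transport a natural primitive embedding of $U$ from the partner side over to $\widetilde{M}_x$ using Huybrechts-Stellari's theorem.

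Given a twisted FM partner $(S_1, \alpha_1) \in {\rm FM\/}^d(S, \alpha)$ with a B-field lift $B_1 \in H^2(S_1, {\Q})$ of $\alpha_1$, I would first invoke Huybrechts-Stellari's theorem to obtain an orientation-preserving Hodge isometry $\widetilde{\Phi} : \widetilde{H}(S_1, B_1, {\Z}) \simeq \widetilde{H}(S, B, {\Z})$. Since $\widetilde{\Phi}$ is a Hodge isometry of the full Mukai lattices, it restricts to isometries $\widetilde{\Phi}|_T : T(S_1, B_1) \simeq T$ and $\widetilde{\Phi}|_{\widetilde{NS}} : \widetilde{NS}(S_1, B_1) \simeq \widetilde{NS}(S, B)$.

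Next, applying Lemma \ref{twist and disc form} to $(S_1, \alpha_1, B_1)$ identifies $\mu([(S_1, \alpha_1)]) \in D_T$ with the image of the class of $\lambda_{\widetilde{H}(S_1, B_1, {\Z})}((0, 0, -1/d))$ under the isomorphism $D_{T(S_1, B_1)} \simeq D_T$ induced by $\widetilde{\Phi}|_T$. Because $\widetilde{\Phi}$ preserves the primitive decomposition $\widetilde{NS} \oplus T$ inside the unimodular Mukai lattice, Proposition \ref{Nikulin1} implies that $\widetilde{\Phi}$ intertwines the two gluing isometries. A diagram chase with these gluings then shows that, for this representative $x$, the overlattice $\widetilde{M}_x = \langle \lambda(x), \widetilde{NS}(S, B) \rangle$ equals $\widetilde{\Phi}(\widetilde{M}'_x)$, where $\widetilde{M}'_x := \langle (0, 0, -1/d), \widetilde{NS}(S_1, B_1) \rangle \subset \widetilde{NS}(S_1, B_1)^{\vee}$.

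It then suffices to exhibit a primitive embedding $U \hookrightarrow \widetilde{M}'_x$. The element $e := (0, 0, -1/d) \in \widetilde{M}'_x$ is Mukai-isotropic. Since ${\rm ord\/}(\alpha_1) = d$, the divisibility of $(0, 0, 1) \in \widetilde{NS}(S_1, B_1)$ equals $d$, so there exists $f = (d, c, s) \in \widetilde{NS}(S_1, B_1)$ satisfying $(e, f) = 1$. The sublattice ${\Z}e + {\Z}f \subset \widetilde{M}'_x$ is then an even non-degenerate rank-$2$ lattice of signature $(1, 1)$ and determinant $-1$, and so is isomorphic to $U$. Primitivity follows because all $H^0$-components of elements of $\widetilde{M}'_x$ lie in $d{\Z}$, forcing the coefficient of $f$ to be integral; a parallel argument using the fact that elements of $\widetilde{M}'_x$ with vanishing $H^0$ and $H^2$-components have $H^4$-component in $(1/d){\Z}$ forces the coefficient of $e$ to be integral as well.

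The main obstacle is the second step, verifying that $\widetilde{M}_x$ and $\widetilde{M}'_x$ are isometric under $\widetilde{\Phi}$. This requires tracking how the induced isomorphisms of discriminant groups on the algebraic and transcendental parts interact with the respective gluings inside the unimodular Mukai lattices, which is exactly the content of Proposition \ref{Nikulin1}.
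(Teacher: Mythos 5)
Your proof is correct, but it takes a genuinely different route from the paper's. The paper argues purely lattice-theoretically: it chains the isometries $(D_{\widetilde{NS}(S_1)},q)\simeq(D_{T(S_1)},-q)\simeq(\langle x\rangle^{\perp}/\langle x\rangle,-q)\simeq(D_{\widetilde{M}},q)$ to conclude that $\widetilde{M}=\langle\lambda(x),\widetilde{NS}(S,B)\rangle$ is isogenus to the \emph{untwisted} lattice $\widetilde{NS}(S_1)=U\oplus NS(S_1)$, and then invokes Proposition \ref{Nikulin2} (uniqueness of the class in the genus once a $U$ splits off) to get $\widetilde{M}\simeq U\oplus NS(S_1)$, so the embedding of $U$ comes for free; only a Hodge isometry $T(S_1,\alpha_1)\simeq T$ is needed. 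You instead transport $\widetilde{M}$ to $\langle(0,0,-1/d),\widetilde{NS}(S_1,B_1)\rangle$ via a Hodge isometry of the full twisted Mukai lattices and build a copy of $U$ by hand from $(0,0,-1/d)$ and a vector with rank component $d$. Your diagram chase with Proposition \ref{Nikulin1} is sound and the primitivity check works, but note two costs: (i) you need the Huybrechts--Stellari isometry of the full twisted Mukai lattices, which the paper's proof of this proposition avoids; and (ii) the existence of $f=(d,c,s)\in\widetilde{NS}(S_1,B_1)$ rests on the divisibility of $(0,0,1)$ in $\widetilde{NS}(S_1,B_1)$ being \emph{exactly} $d$ --- the paper states this after Lemma \ref{twist and disc form} but explicitly declines to prove or use it, and Lemma \ref{twist and disc form} itself only gives that $d$ divides the divisibility, so in a complete write-up this step would need its own justification. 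The pay-off of your approach is that it is constructive: it exhibits the hyperbolic plane explicitly rather than deducing its existence from genus theory.
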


\begin{proof}
For a twisted FM partner $(S_{1}, \alpha _{1})\in {\rm FM\/}^{d}(S, \alpha )$, 
write $[x]:=\mu ([(S_{1}, \alpha _{1})])$. 
By Proposition \ref{Nikulin2}, 
it suffices to show that 
$\widetilde{NS}(S_{1})$ and 
$\widetilde{M}:= \langle \lambda (x), \widetilde{NS}(S, B) \rangle $
are isogenus. 
We have the isometry 
\begin{eqnarray*}
(D_{\widetilde{NS}(S_{1})}, q) 
& \simeq & 
(D_{T(S_{1})}, -q) \simeq (\langle x \rangle ^{\perp} /\langle x \rangle , -q) \\ 
& \simeq & 
(\langle \lambda (x) \rangle ^{\perp} /\langle \lambda(x) \rangle , q) \simeq (D_{\widetilde{M}}, q).
\end{eqnarray*}
\end{proof}

We make the following identification 
tacitly in Section 4. 
For 
$x\in I^{d}(D_{T})$ 
such that 
$[x]=\mu ([(S_{1}, \alpha _{1})])$, 
set 
$T_{x}:= \langle x, T \rangle $.  
For a Hodge isometry 
$g_{1}:T(S_{1}, \alpha _{1}) \simeq T$ 
with 
$g_{1}(\alpha _{1}^{-1}(\bar{1}))=x$, 
by Proposition \ref{Nikulin2} 
there exists an isometry 
\[
\gamma : 
\widetilde{NS}(S_{1}) 
\stackrel{\simeq}{\to}  
\langle \lambda (x), \widetilde{NS}(S, B) \rangle \: \: \: \: \: 
{\rm with\/} \: \: \: 
r(\gamma ) \circ \lambda _{\widetilde{H}(S_{1}, {\Z})} = \bar{\lambda } \circ r(\bar{g_{1}}) . 
\]
Here 
the Hodge isometry 
$\bar{g_{1}} : T(S_{1}) \simeq T_{x}$ 
is induced from $g_{1}$, 
and 
the isometry 
$\bar{\lambda } : 
(D_{T_{x}}, q) 
\simeq 
(D_{\langle \lambda (x), \widetilde{NS}(S, B) \rangle }, -q)$ 
is induced from 
$\lambda $.

\subsection{Embeddings of the hyperbolic plane}

Let 
$(S_{1}, \alpha _{1}) \in {\rm FM\/}^{d}(S, \alpha )$
be a twisted FM partner of $(S, \alpha )$. 
Recall from Section 2.2 that 
we have defined the finite-index subgroup 
$\Gamma (S_{1}, \alpha _{1}) \subset O(\widetilde{NS}(S_{1}))$. 
The subgroup 
$\Gamma (S_{1}, \alpha _{1})^{+} \subset \Gamma (S_{1}, \alpha _{1})$ 
consists of 
orientation-preserving isometries in 
$\Gamma (S_{1}, \alpha _{1})$ (see Section 2.1). 
We define the map 
\[
\nu : \pi ^{-1} \Bigl( \pi ((S_{1}, \alpha _{1}))\Bigr) \to 
\Gamma (S_{1}, \alpha _{1})^{+} \backslash {\rm Emb\/} (U, \widetilde{NS}(S_{1}))
\]
as follows. 
For a twisted FM partner  
$(S_{2}, \alpha _{2}) \in \pi ^{-1} \Bigl( \pi ((S_{1}, \alpha _{1}))\Bigr) $ 
there exists a Hodge isometry 
$g:T(S_{2})\stackrel{\simeq}{\to} T(S_{1})$ 
with 
$g^{\ast}\alpha _{1}=\alpha _{2}$. 
By Proposition \ref{Nikulin2}, 
$g$ can be extended to a Hodge isometry 
$\widetilde{\Phi} : \widetilde{H}(S_{2}, {\Z}) \stackrel{\simeq}{\to} \widetilde{H}(S_{1}, {\Z})$ 
such that 
the isometry 
$\widetilde{\Phi} |_{\widetilde{NS}(S_{2})} : 
\widetilde{NS}(S_{2}) \stackrel{\simeq}{\to} \widetilde{NS}(S_{1}) $ 
is orientation-preserving. 
Then, 
by considering 
$\widetilde{\Phi} : H^{0}(S_{2}, {\Z})+H^{4}(S_{2}, {\Z}) \hookrightarrow \widetilde{NS}(S_{1}) $, 
we obtain an embedding 
$\varphi : U \hookrightarrow \widetilde{NS}(S_{1})$. 
Here 
we identify 
$H^{0}(S_{2}, {\Z})+H^{4}(S_{2}, {\Z})$ 
with 
$U$ by 
identifying $(1, 0, 0)$ with $e$, 
and $(0, 0, -1)$ with $f$.

\begin{lemma}\label{injective 2}
The assignment 
$(S_{2}, \alpha _{2}) \mapsto [\varphi ]$ 
defines an injective map 
\[
\nu : 
\pi ^{-1} \Bigl( \pi ((S_{1}, \alpha _{1}))\Bigr) 
\to 
\Gamma (S_{1}, \alpha _{1})^{+} \backslash {\rm Emb\/} (U, \widetilde{NS}(S_{1})).
\]
\end{lemma}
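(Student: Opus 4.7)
The plan is to split the argument into well-definedness of $\nu$ and its injectivity. For well-definedness, fix $(S_2, \alpha_2) \in \pi^{-1}(\pi((S_1, \alpha_1)))$ and consider two admissible choices $(g, \widetilde{\Phi})$ and $(g', \widetilde{\Phi}')$: Hodge isometries $g, g' : T(S_2) \simeq T(S_1)$ with $g^*\alpha_1 = (g')^*\alpha_1 = \alpha_2$, and orientation-preserving extensions $\widetilde{\Phi}, \widetilde{\Phi}' : \widetilde{H}(S_2, \Z) \simeq \widetilde{H}(S_1, \Z)$. I would form $\widetilde{h} := \widetilde{\Phi}' \circ \widetilde{\Phi}^{-1}$, a Hodge auto-isometry of $\widetilde{H}(S_1, \Z)$. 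Its restriction to $T(S_1)$ is $g' \circ g^{-1}$, which lies in $O_{Hodge}(T(S_1), \alpha_1)$ since both $g$ and $g'$ pull $\alpha_1$ back to $\alpha_2$. By Proposition \ref{Nikulin1}, the gluing relation $r(\widetilde{h}|_{\widetilde{NS}(S_1)}) = \lambda \circ r(\widetilde{h}|_{T(S_1)})$ holds, so by the definition of $\Gamma(S_1, \alpha_1)$ the restriction $\widetilde{h}|_{\widetilde{NS}(S_1)}$ lies in $\Gamma(S_1, \alpha_1)$. Because $\widetilde{\Phi}'|_{\widetilde{NS}(S_2)} = \widetilde{h}|_{\widetilde{NS}(S_1)} \circ \widetilde{\Phi}|_{\widetilde{NS}(S_2)}$ and both outer terms are orientation-preserving by choice, so is $\widetilde{h}|_{\widetilde{NS}(S_1)}$, placing it in $\Gamma(S_1, \alpha_1)^+$. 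Consequently the induced embeddings $\varphi$ and $\varphi'$ differ by this element, so their classes in $\Gamma(S_1, \alpha_1)^+ \backslash {\rm Emb\/}(U, \widetilde{NS}(S_1))$ agree. Independence from the isomorphism class of $(S_2, \alpha_2)$ is immediate: an isomorphism of twisted $K3$ surfaces induces an orientation-preserving Hodge isometry of Mukai lattices fixing the $U$-summands.

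For injectivity, suppose $\nu([(S_2, \alpha_2)]) = \nu([(S_3, \alpha_3)])$ and choose $\widetilde{\Phi}_2, \widetilde{\Phi}_3$ with induced embeddings $\varphi_2, \varphi_3$ satisfying $\varphi_3 = \gamma \circ \varphi_2$ for some $\gamma \in \Gamma(S_1, \alpha_1)^+$. By the definition of $\Gamma(S_1, \alpha_1)$ there exists $g_0 \in O_{Hodge}(T(S_1), \alpha_1)$ with $r(\gamma) = \lambda \circ r(g_0)$, and Proposition \ref{Nikulin1} then lifts $(\gamma, g_0)$ to a Hodge auto-isometry $\widetilde{\gamma}$ of $\widetilde{H}(S_1, \Z)$ preserving $\alpha_1$. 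Set
\[
\widetilde{\Psi} := \widetilde{\Phi}_3^{-1} \circ \widetilde{\gamma} \circ \widetilde{\Phi}_2 : \widetilde{H}(S_2, \Z) \stackrel{\simeq}{\to} \widetilde{H}(S_3, \Z).
\]
This is an orientation-preserving Hodge isometry which, by construction, carries the $U$-summand $H^0(S_2, \Z) \oplus H^4(S_2, \Z)$ onto $H^0(S_3, \Z) \oplus H^4(S_3, \Z)$ identifying the canonical generators, hence restricts to an orientation-preserving Hodge isometry $\Psi : H^2(S_2, \Z) \simeq H^2(S_3, \Z)$. A direct computation using $\widetilde{\Phi}_i^*\alpha_1 = \alpha_i$ and $\widetilde{\gamma}^*\alpha_1 = \alpha_1$ yields $\Psi^*\alpha_3 = \alpha_2$ on $T(S_2)$.

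By the Torelli theorem for $K3$ surfaces, $\Psi$ may be adjusted by a product of Weyl reflections in $(-2)$-classes of $NS(S_3)$ (and, if necessary, by $-{\rm id\/}$) to a Hodge isometry sending an ample class to an ample class; this modified effective isometry is then induced by an isomorphism $f : S_3 \stackrel{\simeq}{\to} S_2$ whose pullback equals the modification. Since each such modification acts as the identity on $T(S_3)$, the restrictions of $f^*$ and $\Psi$ to $T(S_2)$ agree, so $\alpha_3 \circ f^*|_{T(S_2)} = \Psi^*\alpha_3 = \alpha_2$, which translates to $f^*\alpha_2 = \alpha_3$ in the Brauer group. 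This yields an isomorphism $(S_3, \alpha_3) \simeq (S_2, \alpha_2)$ of twisted $K3$ surfaces, proving injectivity. The main obstacle is verifying that the Weyl-group and sign corrections required to apply Torelli do not disturb the twist compatibility; this works precisely because those corrections are supported on the algebraic lattice and therefore act trivially on the transcendental lattice that carries the Brauer data.
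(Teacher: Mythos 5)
Your argument is correct and follows essentially the same route as the paper: well-definedness by composing the two extensions and checking the gluing condition of Proposition \ref{Nikulin1} together with orientation-preservation, and injectivity by lifting $\gamma \in \Gamma(S_{1}, \alpha_{1})^{+}$ to a Hodge isometry of the Mukai lattice, restricting to $H^{2}$, correcting by the Weyl group, and invoking the Torelli theorem. One small repair: drop the parenthetical ``(and, if necessary, by $-{\rm id\/}$)'' --- a $-{\rm id\/}$ correction acts as $-1$ on the transcendental lattice and would send $\alpha_{3}$ to $-\alpha_{3}$, contradicting your subsequent claim that all corrections act trivially on $T(S_{3})$; it is in fact never needed here, precisely because $\gamma$ is orientation-preserving, which forces $\Psi$ to carry the positive cone $NS(S_{2})^{+}$ to $NS(S_{3})^{+}$, so that Weyl reflections alone suffice (this is exactly where the hypothesis $\gamma \in \Gamma(S_{1}, \alpha_{1})^{+}$ rather than $\Gamma(S_{1}, \alpha_{1})$ is used).
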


\begin{proof}
Firstly we prove that $\nu $ is well-defined.
Let us given two Hodge isometries 
$g, g' : T(S_{2})\stackrel{\simeq}{\to} T(S_{1})$ 
with 
$g^{\ast}\alpha _{1} = (g')^{\ast}\alpha _{1} = \alpha _{2}$. 
Let 
$\gamma , \gamma ' : \widetilde{NS}(S_{2}) \stackrel{\simeq}{\to} \widetilde{NS}(S_{1})$ 
be orientation-preserving isometries 
such that 
$\gamma \oplus g$ and $\gamma '\oplus g'$ 
extend to Hodge isometries 
$\widetilde{H}(S_{2}, {\Z}) \simeq \widetilde{H}(S_{1}, {\Z})$. 
Then 
the isometry 
$\gamma '\circ \gamma ^{-1} \in O(\widetilde{NS}(S_{1}))$ 
preserves the orientation 
and satisfies 
$\lambda \circ r(\gamma '\circ \gamma ^{-1}) = r(g'\circ g^{-1}) \circ \lambda $, 
so that we have 
$\gamma '\in \Gamma (S_{1}, \alpha _{1})^{+}\cdot \gamma $.  
Hence 
the map $\nu $ is well-defined.

We prove the injectivity of $\nu $. 
For two partners  
$(S_{i}, \alpha _{i}) \in \pi ^{-1} \Bigl( \pi ((S_{1}, \alpha _{1}))\Bigr) $, 
$i=2, 3$, 
set 
$[\varphi _{i}]:= \nu ((S_{i}, \alpha _{i}))$ 
and assume the existence of an isometry 
$\gamma \in \Gamma (S_{1}, \alpha _{1})^{+}$ 
such that 
$\varphi _{2}=\gamma \circ \varphi _{3}$. 
We can extend $\gamma $ to a Hodge isometry 
$\widetilde{\Phi} : \widetilde{H}(S_{1}, {\Z}) \stackrel{\simeq}{\to}  \widetilde{H}(S_{1}, {\Z})$ 
with 
$(\widetilde{\Phi} |_{T(S_{1})})^{\ast}\alpha _{1} = \alpha _{1}$. 
By restricting $\widetilde{\Phi}$ to 
$\varphi _{2}(U)^{\perp} \cap \widetilde{H}(S_{1}, {\Z})$, 
we obtain a Hodge isometry 
$\Phi : H^{2}(S_{2}, {\Z}) \stackrel{\simeq}{\to} H^{2}(S_{3}, {\Z})$ 
with 
$(\Phi |_{T(S_{2})})^{\ast}\alpha _{3} = \alpha _{2}$. 
Since 
$\gamma $ preserves the orientation, 
$\Phi $ maps 
the positive cone $NS(S_{2})^{+}$ to 
the positive cone $NS(S_{3})^{+}$.
Composing $\Phi $ with an element of the Weyl group $W(S_{3})$, 
we may assume that $\Phi $ is effective. 
Then 
it follows from the Torelli theorem (\cite{PS-S}, \cite{K3}) that 
$(S_{3}, \alpha _{3}) \simeq (S_{2}, \alpha _{2})$. 
\end{proof}

\begin{prop}\label{bijective if untwisted 2}
If $(S, \alpha )$ has an untwisted FM partner, 
then the map $\nu $ 
for every twisted FM partner 
$(S_{1}, \alpha _{1}) \in {\rm FM\/}^{d}(S, \alpha )$ 
is bijective.
\end{prop}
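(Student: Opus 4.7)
The plan is to establish surjectivity of $\nu$, since injectivity is given by Lemma \ref{injective 2}. Starting from a primitive embedding $\varphi : U \hookrightarrow \widetilde{NS}(S_1)$, I will construct a twisted $K3$ surface $(S_2, \alpha_2) \in \pi^{-1}(\pi((S_1, \alpha_1)))$ whose image under $\nu$ is $[\varphi]$.

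Because $U$ is unimodular, set $\Lambda_\varphi := \varphi(U)^\perp \cap \widetilde{H}(S_1, \Z)$, so that $\widetilde{H}(S_1, \Z) = \varphi(U) \oplus \Lambda_\varphi$ and $\Lambda_\varphi \simeq \Lambda_{K3}$. Since $\varphi(U) \subset \widetilde{NS}(S_1)$ is of pure $(1,1)$-type, the summand $\Lambda_\varphi$ inherits a weight-two Hodge structure whose transcendental part is exactly $T(S_1)$. By surjectivity of the period map (\cite{To}, \cite{K3}), there exist a $K3$ surface $S_2$ and a Hodge isometry $\Phi : H^2(S_2, \Z) \simeq \Lambda_\varphi$. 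Extending $\Phi$ across $\varphi(U)$ by the identifications $(1, 0, 0) \leftrightarrow \varphi(e)$ and $(0, 0, -1) \leftrightarrow \varphi(f)$ yields a Hodge isometry $\widetilde{\Phi} : \widetilde{H}(S_2, \Z) \simeq \widetilde{H}(S_1, \Z)$, and I define $\alpha_2 := (\Phi|_{T(S_2)})^* \alpha_1 \in {\rm Br\/}(S_2)$, an element of order $d$.

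I then verify three properties. (i) $(S_2, \alpha_2) \in \pi^{-1}(\pi((S_1, \alpha_1)))$: immediate, since $\Phi|_{T(S_2)}$ witnesses the defining equivalence. (ii) $(S_2, \alpha_2) \in {\rm FM\/}^d(S, \alpha)$: composing Hodge isometries gives $T(S_2, \alpha_2) \simeq T(S_1, \alpha_1) \simeq T(S, \alpha)$, and the hypothesis that $(S, \alpha)$ has an untwisted FM partner $(S'', 0)$ supplies a Hodge isometry $T(S'') \simeq T(S_2, \alpha_2)$, so Proposition \ref{H-S} (which requires one side untwisted) yields $D^b(S_2, \alpha_2) \simeq D^b(S'') \simeq D^b(S, \alpha)$. (iii) $\nu((S_2, \alpha_2)) = [\varphi]$: by construction, $\widetilde{\Phi}$ restricts to $\varphi$ on $H^0(S_2, \Z) \oplus H^4(S_2, \Z)$, so this holds provided $\widetilde{\Phi}|_{\widetilde{NS}(S_2)}$ preserves the canonical orientation fixed in Section 2.1.

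The main technical point I anticipate is enforcing this orientation condition in (iii). If the initial choice of $\Phi$ yields an orientation-reversing $\widetilde{\Phi}|_{\widetilde{NS}(S_2)}$, I replace $\Phi$ with $-\Phi$, which is still a Hodge isometry $H^2(S_2, \Z) \simeq \Lambda_\varphi$. Then $\widetilde{\Phi}$ sends $(0, H_2, 0)$ to $(0, -\Phi(H_2), 0)$ for an ample class $H_2 \in NS(S_2)$, so the canonical positive 2-plane of $\widetilde{NS}(S_2)$ is mapped to a 2-plane with opposite orientation, correcting the sign. This substitution replaces $\alpha_2$ by $-\alpha_2$, but $(S_2, -\alpha_2)$ still sits in the fiber $\pi^{-1}(\pi((S_1, \alpha_1)))$ via the Hodge isometry $-\Phi|_{T(S_2)}$, and the argument in (ii) is unaffected by the overall sign. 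Thus whichever of $(S_2, \alpha_2)$ or $(S_2, -\alpha_2)$ corresponds to the orientation-preserving extension is the desired preimage of $[\varphi]$.
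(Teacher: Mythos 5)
Your proof is correct and follows essentially the same route as the paper: split off $\varphi (U)$, apply surjectivity of the period map to $\Lambda _{\varphi}$ with the period induced from $T(S_{1})$, pull back $\alpha _{1}$ along $\Phi |_{T(S_{2})}$, and invoke Proposition \ref{H-S} via the untwisted partner. The only cosmetic difference is that the paper enforces the orientation condition at the outset by choosing the marking so that $\Phi (NS(S_{\varphi})^{+})=M_{\varphi}^{+}$, whereas you correct it a posteriori by replacing $\Phi $ with $-\Phi $ (at the cost of replacing $\alpha _{2}$ by $-\alpha _{2}$, which you rightly note stays in the same fiber); the two devices are equivalent.
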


\begin{proof} 
We may assume that 
$(S, \alpha )$ itself is untwisted.
To prove the surjectivity, 
we shall construct a twisted $K3$ surface 
$(S_{\varphi}, \alpha _{\varphi})$ 
from an arbitrary embedding 
$\varphi : U \hookrightarrow \widetilde{NS}(S_{1})$. 
The lattice 
$\Lambda _{\varphi} := \varphi (U)^{\perp}\cap \widetilde{H}(S_{1}, {\Z})$ 
is isometric to the $K3$ lattice $\Lambda _{K3}$ 
and possesses the period induced from $T(S_{1})$. 
On the other hand, 
the lattice 
$M_{\varphi}:= \varphi (U)^{\perp}\cap \widetilde{NS}(S_{1})$ 
has the orientation 
induced from $\varphi $ and the orientation of $\widetilde{NS}(S_{1})$. 
That is, 
we can choose a connected component $M_{\varphi}^{+}$ 
of the open set 
$\{ v\in M_{\varphi}\otimes {\R}, \: (v,v)>0 \}$ 
so that 
for each vector 
$v \in  M_{\varphi}^{+}$ 
the oriented positive-definite two-plane 
${\R}\varphi (e+f) \oplus {\R}v \subset \widetilde{NS}(S_{1}) \otimes {\R}$ 
is of positive orientation.  
By the surjectivity of the period map, 
there exist a $K3$ surface $S_{\varphi}$ 
and a Hodge isometry 
$\Phi : H^{2}(S_{\varphi}, {\Z}) \stackrel{\simeq}{\to} \Lambda _{\varphi}$ 
such that 
$\Phi (NS(S_{\varphi})^{+})= M_{\varphi}^{+}$. 
Pulling back $\alpha _{1}$ by $\Phi |_{T(S_{\varphi})}$, 
we obtain a twisted $K3$ surface 
$(S_{\varphi}, \alpha _{\varphi})$.

Since there is a Hodge isometry 
\[
T(S_{\varphi}, \alpha _{\varphi}) \simeq  T(S_{1}, \alpha _{1}) \simeq T(S),  
\] 
we have an equivalence 
$D^{b}(S_{\varphi}, \alpha _{\varphi}) \simeq D^{b}(S)$ 
by Proposition \ref{H-S}. 
Hence 
we have 
$(S_{\varphi}, \alpha _{\varphi}) \in \pi ^{-1} \Bigl( \pi ((S_{1}, \alpha _{1}))\Bigr) $.

Since 
the direct sum of 
the Hodge isometry 
$\Phi |_{T(S_{\varphi})} : T(S_{\varphi}) \stackrel{\simeq}{\to} T(S_{1})$ 
satisfying 
$(\Phi |_{T(S_{\varphi})})^{\ast}\alpha _{1} = \alpha _{\varphi}$ 
and 
the orientation-preserving isometry 
\[
\varphi \oplus ( \Phi |_{NS(S_{\varphi})} ) : 
\widetilde{NS}(S_{\varphi}) 
\stackrel{\simeq}{\to} 
\varphi (U)\oplus M_{\varphi} = \widetilde{NS}(S_{1})
\] 
can be extended to the Hodge isometry 
$\varphi \oplus \Phi $, 
we have 
$\nu ((S_{\varphi}, \alpha _{\varphi}))=[\varphi ]$. 
\end{proof}

\begin{remark}\label{sharpness and a conjecture}
In his thesis \cite{Ca1}, 
C\u ald\u araru proposed the following problem: 

\begin{question}[\cite{Ca1}]
Let $(S_{1}, \alpha _{1})$ be a twisted $K3$ surface. 
For each untwisted FM partner 
$S_{2} \in {\rm FM\/}(S_{1})$ 
and each Hodge isometry 
$g: T(S_{2}) \stackrel{\simeq}{\to} T(S_{1})$, 
we obtain a twisted $K3$ surface 
$(S_{2}, g^{\ast}\alpha _{1})$. 
Is it true that 
$(S_{2}, g^{\ast}\alpha _{1}) \in {\rm FM\/}^{d}(S_{1}, \alpha _{1})$ ?
\end{question}

From the construction of the twisted $K3$ surface 
$(S_{\varphi}, \alpha _{\varphi})$ 
in the proof of Proposition \ref{bijective if untwisted 2}, 
it is immediately verified that 
the map $\nu $ in Lemma \ref{injective 2} is bijective 
if and only if 
the answer to C\u ald\u araru's question is positive  
for the twisted $K3$ surface $(S_{1}, \alpha _{1})$ 
and each $S_{2}$, $g$.

\end{remark}

\subsection{Union of the two orbits}

Given an embedding 
$\varphi : U \hookrightarrow \widetilde{NS}(S_{1})$, 
we have the decomposition 
\[
\Gamma (S_{1}, \alpha _{1}) = 
\Gamma (S_{1}, \alpha _{1})^{+} \sqcup \:
\Gamma (S_{1}, \alpha _{1})^{+}\cdot ( -{\rm id\/}_{\varphi (U)} \oplus {\rm id\/}_{\varphi (U)^{\perp}} ).  
\] 
From this we obtain 
\[
\Gamma (S_{1}, \alpha _{1}) \cdot \varphi 
\; = \; 
\Gamma (S_{1}, \alpha _{1})^{+}\cdot \varphi 
\: \cup \: 
\Gamma (S_{1}, \alpha _{1})^{+}\cdot (-\varphi ).
\]

\begin{lemma}\label{same orbit}
Let 
$(S_{\varphi}, \alpha _{\varphi})$ 
be the twisted $K3$ surface 
constructed in the proof of Proposition $\ref{bijective if untwisted 2}$. 
Then 
$-\varphi \in \Gamma (S_{1}, \alpha _{1}) ^{+} \cdot \varphi $ 
if and only if 
$(S_{\varphi}, \alpha _{\varphi}) \simeq (S_{\varphi}, -\alpha _{\varphi})$.   
\end{lemma}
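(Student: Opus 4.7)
The plan is to invoke the injectivity of the map $\nu$ established in Lemma \ref{injective 2}. Once we verify that $(S_{\varphi}, -\alpha_{\varphi})$ also lies in the fiber $\pi^{-1}(\pi((S_1, \alpha_1)))$ and compute its $\nu$-image, the two twisted surfaces $(S_{\varphi}, \alpha_{\varphi})$ and $(S_{\varphi}, -\alpha_{\varphi})$ will be isomorphic if and only if their $\nu$-images coincide in $\Gamma(S_1, \alpha_1)^{+} \backslash {\rm Emb\/}(U, \widetilde{NS}(S_1))$. Since $\nu((S_{\varphi}, \alpha_{\varphi})) = [\varphi]$ by the construction at the end of the proof of Proposition \ref{bijective if untwisted 2}, the task reduces to showing $\nu((S_{\varphi}, -\alpha_{\varphi})) = [-\varphi]$.

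For fiber membership, the Hodge isometry $-\Phi|_{T(S_{\varphi})} : T(S_{\varphi}) \stackrel{\simeq}{\to} T(S_1)$ satisfies $(-\Phi|_{T(S_{\varphi})})^{\ast}\alpha_1 = -\alpha_{\varphi}$, so $(S_{\varphi}, -\alpha_{\varphi}) \in \pi^{-1}(\pi((S_1, \alpha_1)))$. To compute $\nu$, I extend this Hodge isometry to the Mukai lattices by setting $\widetilde{\Phi}' := (-\varphi) \oplus (-\Phi) : \widetilde{H}(S_{\varphi}, \Z) \stackrel{\simeq}{\to} \widetilde{H}(S_1, \Z)$, where the first summand describes the embedding $H^{0}(S_{\varphi}, \Z) + H^{4}(S_{\varphi}, \Z) = U \hookrightarrow \widetilde{NS}(S_1)$ sending $e \mapsto -\varphi(e)$ and $f \mapsto -\varphi(f)$. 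The crucial step is checking that $\widetilde{\Phi}'|_{\widetilde{NS}(S_{\varphi})}$ is orientation-preserving: applied to the positively oriented reference basis $((1, 0, -1), (0, H_{\varphi}, 0))$ with $H_{\varphi}$ ample, it yields the basis $(-\varphi(e+f), -\Phi(H_{\varphi}))$. Since $\Phi(H_{\varphi}) \in M_{\varphi}^{+}$, the basis $(\varphi(e+f), \Phi(H_{\varphi}))$ is positively oriented in $\widetilde{NS}(S_1)$ by the very definition of $M_{\varphi}^{+}$ used in the proof of Proposition \ref{bijective if untwisted 2}; negating both vectors is a $GL_2$-transformation of determinant $+1$, so the new basis remains positively oriented. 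Hence $\widetilde{\Phi}'$ is an admissible extension and $\nu((S_{\varphi}, -\alpha_{\varphi})) = [-\varphi]$.

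Combining the two computations, $(S_{\varphi}, \alpha_{\varphi}) \simeq (S_{\varphi}, -\alpha_{\varphi})$ if and only if $[\varphi] = [-\varphi]$ in $\Gamma(S_1, \alpha_1)^{+} \backslash {\rm Emb\/}(U, \widetilde{NS}(S_1))$, that is, if and only if $-\varphi \in \Gamma(S_1, \alpha_1)^{+} \cdot \varphi$. The only subtle point is the orientation bookkeeping in the extension step above; the rest follows mechanically from the injectivity of $\nu$ and the formalism already in place.
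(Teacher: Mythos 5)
Your proof is correct and rests on the same key computation as the paper's: negating the Hodge isometry flips the twist from $\alpha_{\varphi}$ to $-\alpha_{\varphi}$ while the simultaneous negation on the algebraic part (a determinant-$+1$ change of basis on the positive two-plane) keeps the extension orientation-preserving, so everything reduces to the injectivity and well-definedness of $\nu$. The only cosmetic difference is that you compute $\nu((S_{\varphi},-\alpha_{\varphi}))=[-\varphi]$ directly, whereas the paper identifies $[-\varphi]$ with $(S_{-\varphi},\alpha_{-\varphi})$ via the surjectivity construction and then shows $(S_{-\varphi},\alpha_{-\varphi})\simeq(S_{\varphi},-\alpha_{\varphi})$; the two routes are equivalent given Lemma \ref{injective 2}.
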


\begin{proof} 
It suffices to show that 
$(S_{-\varphi}, \alpha _{-\varphi}) \simeq (S_{\varphi}, -\alpha _{\varphi})$.   
Since 
$M_{-\varphi}^{+} = -M_{\varphi}^{+}$, 
we have a Hodge isometry 
$\Phi : H^{2}(S_{-\varphi}, {\Z}) \stackrel{\simeq}{\to} H^{2}(S_{\varphi}, {\Z})$ 
with 
$(\Phi |_{T(S_{-\varphi})})^{\ast}\alpha _{\varphi} = \alpha _{-\varphi}$ 
and 
$\Phi (NS(S_{-\varphi})^{+}) = -NS(S_{\varphi})^{+}$. 
Then 
$-\Phi : H^{2}(S_{-\varphi}, {\Z}) \stackrel{\simeq}{\to} H^{2}(S_{\varphi}, {\Z})$ 
satisfies 
$(-\Phi |_{T(S_{-\varphi})})^{\ast}(-\alpha _{\varphi}) = \alpha _{-\varphi}$ 
and 
$-\Phi (NS(S_{-\varphi})^{+}) = NS(S_{\varphi})^{+}$. 
\end{proof}

When 
${\rm ord\/}(\alpha _{1}) \leq 2$, 
we have 
$\alpha _{\varphi} = -\alpha _{\varphi}$
so that 
$\Gamma (S_{1}, \alpha _{1}) \cdot \varphi = \Gamma (S_{1}, \alpha _{1}) ^{+} \cdot \varphi $ 
for every embedding $\varphi $. 
Hence we obtain the identification 
\[
\Gamma (S_{1}, \alpha _{1})^{+} \backslash {\rm Emb\/} (U, \widetilde{NS}(S_{1})) 
\: \simeq \: 
\Gamma (S_{1}, \alpha _{1})     \backslash {\rm Emb\/} (U, \widetilde{NS}(S_{1})). 
\]

\begin{prop}\label{anti-symplectic}
Assume that 
${\rm ord\/}(\alpha _{1}) \geq 3$. 
For an embedding  
$\varphi : U \hookrightarrow \widetilde{NS}(S_{1})$ 
the following conditions are equivalent. 

$({\rm i\/})$  
$(S_{\varphi}, \alpha _{\varphi}) \simeq (S_{\varphi}, -\alpha _{\varphi})$.   
 
$({\rm ii\/})$  
$S_{\varphi}$ admits an anti-symplectic automorphism. 

$({\rm iii\/})$  
$\varphi (U)^{\perp} \cap \widetilde{NS}(S_{1}) \in \mathcal{G}_{1}(NS(S_{1}))$, 
i.e, 
$O(NS(S_{\varphi}))_{0}^{+} \ne O(NS(S_{\varphi}))_{0}$. 
\end{prop}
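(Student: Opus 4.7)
My strategy is to prove the cyclic chain $({\rm ii\/}) \Rightarrow ({\rm i\/}) \Rightarrow ({\rm iii\/}) \Rightarrow ({\rm ii\/})$, using Lemma \ref{same orbit} to bridge the geometric condition $({\rm i\/})$ and the lattice-theoretic $({\rm iii\/})$, and Nikulin's gluing together with the Torelli theorem to pass from $({\rm iii\/})$ to $({\rm ii\/})$. The easiest direction is $({\rm ii\/}) \Rightarrow ({\rm i\/})$: an anti-symplectic automorphism $f$ of $S_{\varphi}$ (with $f^{\ast}|_{T(S_{\varphi})} = -{\rm id\/}$) yields $f^{\ast}\alpha_{\varphi} = \alpha_{\varphi} \circ (-{\rm id\/}) = -\alpha_{\varphi}$, so $f$ realizes $(S_{\varphi}, \alpha_{\varphi}) \simeq (S_{\varphi}, -\alpha_{\varphi})$.

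For $({\rm iii\/}) \Rightarrow ({\rm ii\/})$, the key point is that $NS(S_{\varphi}) \simeq M_{\varphi}$ has $b_{+} = 1$ odd, so the criterion recalled at the end of Section 2.1 turns $({\rm iii\/})$ into $-{\rm id\/}_{D_{NS(S_{\varphi})}} \in r(O(NS(S_{\varphi}))^{+})$. I pick $h \in O(NS(S_{\varphi}))^{+}$ realizing this; the pair $(h, -{\rm id\/}_{T(S_{\varphi})})$ trivially satisfies the Nikulin gluing compatibility, since $-1$ is a scalar and commutes with $\lambda$, so by Proposition \ref{Nikulin1}(2) it extends to a Hodge isometry $\Psi$ of $H^{2}(S_{\varphi}, {\mathbb Z})$. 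Composing $h$ with a suitable Weyl-group element of $S_{\varphi}$—which acts trivially on both $T(S_{\varphi})$ and $D_{NS(S_{\varphi})}$—one arranges that $\Psi|_{NS(S_{\varphi})}$ preserves the ample cone, whereupon the Torelli theorem produces $f \in {\rm Aut\/}(S_{\varphi})$ with $f^{\ast} = \Psi$; the relation $f^{\ast}|_{T(S_{\varphi})} = -{\rm id\/}$ is exactly the anti-symplectic condition.

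For $({\rm i\/}) \Rightarrow ({\rm iii\/})$, Lemma \ref{same orbit} supplies $\gamma \in \Gamma(S_{1}, \alpha_{1})^{+}$ with $\gamma \varphi = -\varphi$; the decomposition $\widetilde{NS}(S_{1}) = \varphi(U) \oplus M_{\varphi}$ then forces $\gamma = (-{\rm id\/}_{\varphi(U)}) \oplus \gamma_{M}$ for some $\gamma_{M} \in O(M_{\varphi})$. A signature analysis of the positive two-planes of $\widetilde{NS}(S_{1})$ (both $\varphi(U) \simeq U$ and $M_{\varphi}$ contribute $b_{+} = 1$, summing to $b_{+} = 2$) shows that $\gamma$ orientation-preserving forces $\gamma_{M}$ to swap the two components of the positive cone of $M_{\varphi}$. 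By the definition of $\Gamma(S_{1}, \alpha_{1})$ there is $g \in O_{Hodge}(T(S_{1}), \alpha_{1})$ with $\lambda \circ r(\gamma_{M}) = r(g) \circ \lambda$. The hypothesis $d \geq 3$ excludes $-{\rm id\/}_{T}$ from $O_{Hodge}(T(S_{1}), \alpha_{1})$, and combined with the cyclicity of $O_{Hodge}(T(S_{1}))$ this allows one to modify $\gamma$ within its $\Gamma^{+}$-orbit so that the gluing partner on $T(S_{1})$ becomes the identity; the new $\gamma_{M}$ then lies in $O(M_{\varphi})_{0}$ and is still orientation-reversing, giving $M_{\varphi} \in \mathcal{G}_{1}(NS(S_{1}))$.

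\textbf{Main obstacle.} The technical crux is the last reduction to $g = {\rm id\/}$ in $({\rm i\/}) \Rightarrow ({\rm iii\/})$. In the generic regime $O_{Hodge}(T(S_{1}), \alpha_{1}) = \{{\rm id\/}\}$—the typical situation when $d \geq 3$—this step is immediate, but the non-generic cases require a careful analysis of the action of the cyclic group $O_{Hodge}(T(S_{1}))$ on the isotropic element $\alpha_{1} \in D_{T(S_{1}, \alpha_{1})}$, exploiting $d \geq 3$ to control how the gluing partners $g$ interact with the orientation behavior of $\gamma_{M}$ on $M_{\varphi}$.
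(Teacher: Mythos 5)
Your directions $({\rm ii})\Rightarrow({\rm i})$ and $({\rm iii})\Rightarrow({\rm ii})$ are fine and essentially coincide with the paper's (the paper glues $-\gamma\oplus-{\rm id}_{T(S_{\varphi})}$, which is your $h\oplus -{\rm id}$ with $h=-\gamma$). The problem is the crux of $({\rm i})\Rightarrow({\rm iii})$, which you yourself flag as the ``main obstacle'' and do not actually resolve. From Lemma \ref{same orbit} you correctly get $\gamma=-{\rm id}_{\varphi(U)}\oplus\gamma_{M}\in\Gamma(S_{1},\alpha_{1})^{+}$ with $\gamma_{M}$ reversing the positive cone of $M_{\varphi}$ and $r(\gamma_{M})=\bar{\lambda}\circ r(g)\circ\bar{\lambda}^{-1}$ for some $g\in O_{Hodge}(T(S_{1}),\alpha_{1})$. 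But the proposed reduction --- ``modify $\gamma$ within its $\Gamma^{+}$-orbit so that the gluing partner becomes the identity'' --- is not justified: a correcting element $\delta\in\Gamma^{+}$ with $r(\delta)=\lambda r(g^{-1})\lambda^{-1}$ exists in $O(\widetilde{NS}(S_{1}))$ by Proposition \ref{Nikulin2}, but there is no reason it preserves the splitting $\varphi(U)\oplus M_{\varphi}$, and there is no reason that $\bar{\lambda}r(g^{-1})\bar{\lambda}^{-1}$ lies in the image of $r:O(M_{\varphi})\to O(D_{M_{\varphi}})$ at all ($M_{\varphi}$ need not contain a hyperbolic plane, so $r$ need not be surjective on $M_{\varphi}$). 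As written, the step does not go through, and the conclusion $\gamma_{M}\in O(M_{\varphi})_{0}$ is not reached.

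The gap can be closed without leaving your route: since $O_{Hodge}(T(S_{1}))$ is cyclic and its unique involution is $-{\rm id}$, the hypothesis $d\geq 3$ forces $O_{Hodge}(T(S_{1}),\alpha_{1})$ to have \emph{odd} order; taking $N={\rm ord}(g)$ odd, the element $\gamma_{M}^{N}$ lies in $O(M_{\varphi})_{0}$ and is still orientation-reversing, which gives $({\rm iii})$ directly. Note also that the paper avoids this issue entirely by making $({\rm ii})$ the hub: from an isomorphism $f:(S_{\varphi},\alpha_{\varphi})\simeq(S_{\varphi},-\alpha_{\varphi})$ it observes that $\alpha_{\varphi}\circ f^{\ast}=-\alpha_{\varphi}$ forces ${\rm ord}(f^{\ast}|_{T(S_{\varphi})})=2n$ to be even (using $d\geq3$), so $f^{n}$ is anti-symplectic by cyclicity; then $({\rm ii})\Rightarrow({\rm iii})$ is the one-line observation that $-f^{\ast}|_{NS(S_{\varphi})}\in O(NS(S_{\varphi}))_{0}\setminus O(NS(S_{\varphi}))_{0}^{+}$. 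Either repair is short, but some such argument must be supplied; the proposal as it stands leaves the decisive implication open.
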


\begin{proof}
(ii) $\Rightarrow $ (i) is obvious.

(i) $\Rightarrow $ (ii) : 
Assume the existence of an automorphism 
$f : (S_{\varphi}, \alpha _{\varphi}) \simeq (S_{\varphi}, -\alpha _{\varphi})$. 
Since 
${\rm ord\/}(\alpha _{\varphi}) \geq 3$, 
${\rm ord\/}(f^{\ast}|_{T(S_{\varphi})})$ must be even, 
say $2n$. 
As $O_{Hodge}(T(S_{\varphi}))$ is a cyclic group 
(cf. \cite{H-L-O-Y}), 
then $f^{n}$ is an anti-symplectic automorphism of $S_{\varphi}$.

(ii) $\Rightarrow $ (iii) :
For an anti-symplectic automorphism $f$, 
we have 
$-f^{\ast}|_{NS(S_{\varphi})} \in O(NS(S_{\varphi}))_{0} - O(NS(S_{\varphi}))_{0}^{+}$.

(iii) $\Rightarrow $ (ii) : 
Assume the existence of an isometry 
$\gamma \in O(NS(S_{\varphi}))_{0} - O(NS(S_{\varphi}))_{0}^{+}$. 
Then we can extend 
the isometry 
$-\gamma \oplus -{\rm id\/}_{T(S_{\varphi})}$ 
to the anti-symplectic Hodge isometry 
$\Phi : H^{2}(S_{\varphi}, {\Z}) \simeq H^{2}(S_{\varphi}, {\Z})$ 
which preserves the positive cone. 
Composing $\Phi $ with an element of the Weyl group $W(S_{\varphi})$, 
we may assume that $\Phi $ is effective. 
\end{proof}

\section{The counting formula}

\begin{prop}\label{number of orbits}
For a twisted $K3$ surface $(S_{1}, \alpha _{1})$ 
there exists a bijection 
\[
\Gamma (S_{1}, \alpha _{1}) \backslash {\rm Emb\/} (U, \widetilde{NS}(S_{1})) 
\simeq 
\mathop{\bigsqcup}_{M\in \mathcal{G}(NS(S_{1}))} 
\Bigl( 
O_{Hodge}(T(S_{1}), \alpha _{1})  \backslash  O(D_{M})  /  O(M)
\Bigr).
\]
\end{prop}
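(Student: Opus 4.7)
My plan is to parameterize orbits in $\Gamma(S_1,\alpha_1)\backslash{\rm Emb\/}(U,\widetilde{NS}(S_1))$ by the isomorphism class of the orthogonal complement $M_\varphi:=\varphi(U)^\perp\cap\widetilde{NS}(S_1)$, and then reduce each individual fiber to the advertised double coset by applying the discriminant-form homomorphism.

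For the decomposition on the left: since $U$ is unimodular, any primitive embedding $\varphi$ splits off as a direct summand, so $\widetilde{NS}(S_1)=\varphi(U)\oplus M_\varphi$, and $M_\varphi$ shares rank, signature, and discriminant form with $NS(S_1)$; hence $[M_\varphi]\in\mathcal{G}(NS(S_1))$. The class $[M_\varphi]$ is manifestly invariant under $\Gamma(S_1,\alpha_1)\subset O(\widetilde{NS}(S_1))$, so $\varphi\mapsto[M_\varphi]$ descends. For surjectivity: given $M\in\mathcal{G}(NS(S_1))$, the lattice $U\oplus M$ lies in $\mathcal{G}(\widetilde{NS}(S_1))$, and $\widetilde{NS}(S_1)$ is indefinite of rank $\rho(S_1)+2\geq l(D_{\widetilde{NS}(S_1)})+2$, so Proposition \ref{Nikulin2} gives $\mathcal{G}(\widetilde{NS}(S_1))=\{\widetilde{NS}(S_1)\}$, and any isometry $U\oplus M\stackrel{\simeq}{\to}\widetilde{NS}(S_1)$ yields an embedding with complement $\simeq M$.

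Next, fix $M\in\mathcal{G}(NS(S_1))$ and a reference isometry $\psi_0:U\oplus M\stackrel{\simeq}{\to}\widetilde{NS}(S_1)$. An embedding $\varphi$ with $M_\varphi\simeq M$ together with a chosen identification $\theta:M\stackrel{\simeq}{\to}M_\varphi$ assembles into an isometry $\varphi\oplus\theta:U\oplus M\to\widetilde{NS}(S_1)$, recorded via $\psi_0$ by a unique element $g\in O(\widetilde{NS}(S_1))$. The ambiguity $\theta\leadsto\theta\circ h$ with $h\in O(M)$ corresponds to right-multiplication by the subgroup $\widetilde{H}_M:=\psi_0(\{1_U\}\times O(M))\psi_0^{-1}\subset O(\widetilde{NS}(S_1))$, while the $\Gamma(S_1,\alpha_1)$-action becomes left-multiplication. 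Therefore the fiber over $[M]$ is the double coset $\Gamma(S_1,\alpha_1)\backslash O(\widetilde{NS}(S_1))/\widetilde{H}_M$.

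Finally, apply the surjection $r_{\widetilde{NS}(S_1)}:O(\widetilde{NS}(S_1))\twoheadrightarrow O(D_{\widetilde{NS}(S_1)})$ (Proposition \ref{Nikulin2}), noting that $\ker r_{\widetilde{NS}(S_1)}\subset\Gamma(S_1,\alpha_1)$ by the very definition of $\Gamma(S_1,\alpha_1)$. The decomposition $\widetilde{NS}(S_1)\simeq U\oplus M$ furnished by $\psi_0$ together with $D_U=0$ gives an identification $D_{\widetilde{NS}(S_1)}\simeq D_M$ under which $r(\widetilde{H}_M)=r_M(O(M))$; combined with the canonical $\lambda:D_{T(S_1)}\simeq D_{\widetilde{NS}(S_1)}$, the image $r(\Gamma(S_1,\alpha_1))$ is exactly the action of $O_{Hodge}(T(S_1),\alpha_1)$ on $D_M$ used in the statement. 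The double coset therefore collapses to $O_{Hodge}(T(S_1),\alpha_1)\backslash O(D_M)/O(M)$, and taking disjoint union over $M\in\mathcal{G}(NS(S_1))$ yields the claim. The main subtlety is the last step: book-keeping the two identifications of $D_{\widetilde{NS}(S_1)}$ (with $D_M$ via $\psi_0$, and with $D_{T(S_1)}$ via $\lambda$) so that both quotienting actions land in the same copy of $O(D_M)$ with the correct conjugation; everything else is a direct application of Propositions \ref{Nikulin1} and \ref{Nikulin2}.
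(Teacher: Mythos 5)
Your proposal is correct and follows essentially the same route as the paper: decompose ${\rm Emb\/}(U,\widetilde{NS}(S_{1}))$ by the isometry class of $\varphi (U)^{\perp}$, identify each stratum as a homogeneous space for $O(M\oplus U)$ with stabilizer $O(M)$, and push the resulting double coset down through the surjection $r:O(M\oplus U)\to O(D_{M})$ using $O(M\oplus U)_{0}\subset \Gamma (S_{1},\alpha _{1})$. Your extra care with the orbit--stabilizer step and the compatibility of the two identifications of $D_{\widetilde{NS}(S_{1})}$ only makes explicit what the paper leaves implicit.
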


\begin{proof}
We can decompose the set 
${\rm Emb\/}(U, \widetilde{NS}(S_{1}))$ as 
\[
{\rm Emb\/}(U, \widetilde{NS}(S_{1})) = 
\mathop{\bigsqcup}_{M\in \mathcal{G}(NS(S_{1}))} 
\left\{ 
\: \varphi \in {\rm Emb\/}(U, \widetilde{NS}(S_{1})), \: \: \varphi (U)^{\perp} \simeq M \: 
\right\} .
\]
The isometry group 
$O(\widetilde{NS}(S_{1}))=O(M \oplus U)$ 
acts 
transitively on each component 
$
\{ \, \varphi \in {\rm Emb\/}(U, \widetilde{NS}(S_{1})), \: \: \varphi (U)^{\perp} \simeq M \: \} 
$
with the stabilizer subgroup 
$O(M)\subset O(M\oplus U)$. 
From the surjectivity of  
$r : O(M \oplus U) \to O(D_{M \oplus U}) \simeq O(D_{M})$ 
(Proposition 2.2) 
and the inclusion 
$O(M \oplus U)_{0} \subset \Gamma (S_{1}, \alpha _{1})$, 
we have 
\begin{eqnarray*}
 \Gamma (S_{1}, \alpha _{1}) \backslash {\rm Emb\/}(U, \widetilde{NS}(S_{1})) 
& \simeq & 
\mathop{\bigsqcup}_{M\in \mathcal{G}(NS(S_{1}))} 
\Bigl( 
\Gamma (S_{1}, \alpha _{1})  \backslash  O(M \oplus U)  /  O(M)
\Bigr) \\
& \simeq &
\mathop{\bigsqcup}_{M\in \mathcal{G}(NS(S_{1}))} 
\Bigl( 
r(\Gamma (S_{1}, \alpha _{1}) )  \backslash  O(D_{M})  /  r(O(M))
\Bigr) .
\end{eqnarray*}
\end{proof}

Let 
$(S, \alpha )$ 
be a twisted $K3$ surface 
with a B-field lift $B \in H^{2}(S, {\Q})$. 
For an isotropic element 
$x \in I^{d}(D_{T(S, \alpha )})$ 
we define the lattice $T_{x}$ 
by $T_{x}=\langle x, T(S, \alpha ) \rangle $ 
and define the homomorphism 
$\alpha _{x}:T_{x} \twoheadrightarrow {\Z}/d{\Z}$ 
by 
\[
\alpha _{x}:T_{x} \twoheadrightarrow T_{x}/T(S, \alpha ) \simeq \langle x \rangle \simeq {\Z}/d{\Z}, \: \: \: 
\alpha _{x}(x) = \bar{1} . 
\]
For a pair $(x, M)$ 
such that 
$x \in I^{d}(D_{T(S, \alpha )}) \stackrel{\lambda}{\simeq} I^{d}(D_{\widetilde{NS}(S, B )})$
and 
$\langle \lambda (x), \widetilde{NS}(S, B )\rangle \simeq U\oplus M$, 
we define the natural number 
$\tau (x, M)$ by 
\begin{equation*}
\tau (x, M) := \# 
\Bigl( 
O_{Hodge}(T_{x}, \alpha _{x}) \backslash O(D_{M}) /O(M) 
\Bigr) , 
\end{equation*}
where the group $O_{Hodge}(T_{x}, \alpha _{x})$ 
acts on $O(D_{M})$ via 
$r:O_{Hodge}(T_{x}, \alpha _{x}) \to O(D_{T_{x}})$ 
and 
$\bar{\lambda} : O(D_{T_{x}})\simeq O(D_{M\oplus U}) \simeq O(D_{M})$. 
For a natural number $d$ 
we define the natural number 
$\varepsilon (d)$ by 
\[
\varepsilon (d) = \left\{ 
\begin{array}{rl} 
1, & \: \: d=1, 2 \\
2, & \: \: d\geq 3.
\end{array} 
\right. 
\]

From 
Section 3 and Proposition \ref{number of orbits}, 
we deduce the formula for 
$\# {\rm FM\/}^{d}(S)$.

\begin{theorem}\label{the formula}
For a $K3$ surface $S$ 
the following formula holds. 
\begin{equation*}
\# {\rm FM\/}^{d}(S) = 
\mathop{\sum}_{x} \Bigl\{ 
\mathop{\sum}_{M} \tau (x, M) + \varepsilon (d) \mathop{\sum}_{M'} \tau (x, M')  \Bigr\} . 
\end{equation*}
Here 
$x$ runs over the set 
$O_{Hodge}(T(S)) \backslash I^{d}(D_{T(S)})$, 
and the lattices $M$, $M'$ 
run over the sets 
$\mathcal{G}_{1}(\langle \lambda (x), NS(S) \rangle )$, 
$\mathcal{G}_{2}(\langle \lambda (x), NS(S) \rangle )$ 
respectively.
\end{theorem}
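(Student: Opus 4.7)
The plan is to assemble the bijections built in Section 3 and then apply the orbit-counting principle of Proposition \ref{number of orbits}. Because the base surface $S$ is itself untwisted, Proposition \ref{bijective if untwisted 1} makes
\[
\mu : \mathcal{FM}^{d}(S) \stackrel{\simeq}{\longrightarrow} O_{Hodge}(T(S)) \backslash I^{d}(D_{T(S)})
\]
a bijection, and for each class in $\mathcal{FM}^{d}(S)$ with representative $(S_{1}, \alpha _{1})$, Proposition \ref{bijective if untwisted 2} makes $\nu$ a bijection on the corresponding fiber. Combining these,
\[
\# {\rm FM}^{d}(S) \; = \; \sum_{[x]} \# \Bigl( \Gamma (S_{1}, \alpha _{1})^{+} \backslash {\rm Emb}(U, \widetilde{NS}(S_{1})) \Bigr),
\]
where $[x] = \mu([(S_{1}, \alpha _{1})])$ runs over $O_{Hodge}(T(S)) \backslash I^{d}(D_{T(S)})$, so it suffices to compute each summand.

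To pass from $\Gamma^{+}$-orbits to $\Gamma$-orbits, I would use the decomposition $\Gamma \cdot \varphi = \Gamma^{+} \cdot \varphi \cup \Gamma^{+} \cdot (-\varphi)$ recorded in Section 3.3. Every $\Gamma$-orbit is therefore either one $\Gamma^{+}$-orbit or two, the former occurring exactly when $-\varphi \in \Gamma^{+} \cdot \varphi$. Lemma \ref{same orbit} rephrases this condition as $(S_{\varphi}, \alpha _{\varphi}) \simeq (S_{\varphi}, -\alpha _{\varphi})$, which is automatic when $d \le 2$; for $d \ge 3$ Proposition \ref{anti-symplectic} identifies it with $M := \varphi(U)^{\perp} \cap \widetilde{NS}(S_{1}) \in \mathcal{G}_{1}(NS(S_{1}))$. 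Thus, on the stratum ${\rm Emb}_{M}$ of embeddings with orthogonal complement isometric to $M$, the number of $\Gamma^{+}$-orbits equals the number of $\Gamma$-orbits if $M \in \mathcal{G}_{1}(NS(S_{1}))$, and equals $\varepsilon(d)$ times the number of $\Gamma$-orbits if $M \in \mathcal{G}_{2}(NS(S_{1}))$.

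Finally, Proposition \ref{number of orbits} expands the $\Gamma$-count into genus strata
\[
\# \bigl( \Gamma (S_{1}, \alpha _{1}) \backslash {\rm Emb}(U, \widetilde{NS}(S_{1})) \bigr) = \sum_{M \in \mathcal{G}(NS(S_{1}))} \# \Bigl( O_{Hodge}(T(S_{1}), \alpha _{1}) \backslash O(D_{M}) / O(M) \Bigr);
\]
under the identifications from Section 3.1 the Hodge isometry group on the right becomes $O_{Hodge}(T_{x}, \alpha _{x})$, making each summand equal to $\tau(x, M)$. Moreover the indexing set $\mathcal{G}(NS(S_{1}))$ coincides with $\mathcal{G}(\langle \lambda (x), NS(S) \rangle )$, since the last Proposition of Section 3.1 exhibits an isogeny between $\widetilde{NS}(S_{1})$ and $\langle \lambda (x), \widetilde{NS}(S) \rangle$, which descends to the orthogonal complements of the common $U$-summand. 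Substituting and grouping the sum according to $\mathcal{G}_{1} \sqcup \mathcal{G}_{2}$ produces the stated formula. The main obstacle is the $\Gamma^{+}$/$\Gamma$ bookkeeping: one must verify uniformly in $[x]$ that the $\mathcal{G}_{1}/\mathcal{G}_{2}$ dichotomy and the factor $\varepsilon(d)$ correctly encode the splitting of $\Gamma$-orbits into $\Gamma^{+}$-orbits, while keeping the various identifications of transcendental lattices and discriminant groups consistent with the definition of $\tau(x, M)$.
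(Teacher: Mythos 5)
Your proposal is correct and follows exactly the route the paper intends: the paper's own "proof" is the single sentence "From Section 3 and Proposition \ref{number of orbits}, we deduce the formula," and your argument — bijectivity of $\mu$ and $\nu$ in the untwisted case, the $\Gamma^{+}$ versus $\Gamma$ orbit comparison via Lemma \ref{same orbit} and Proposition \ref{anti-symplectic} giving the $\mathcal{G}_{1}/\mathcal{G}_{2}$ dichotomy and the factor $\varepsilon(d)$, and finally Proposition \ref{number of orbits} together with the Section 3.1 identifications of $T(S_{1})$ with $T_{x}$ and of $\mathcal{G}(NS(S_{1}))$ with $\mathcal{G}(\langle\lambda(x), NS(S)\rangle)$ — is precisely the intended assembly of those ingredients. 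The final "obstacle" you flag is in fact already resolved by your second paragraph, since condition (iii) of Proposition \ref{anti-symplectic} depends only on the isometry class of $\varphi(U)^{\perp}\cap\widetilde{NS}(S_{1})$ and is therefore constant on each genus stratum.
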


We also obtain the following inequality.

\begin{prop}\label{the upper bound} 
For a twisted $K3$ surface $(S, \alpha )$ the following inequality holds. 
\begin{equation*}
\# {\rm FM\/}^{d}(S, \alpha ) \leq  
\mathop{\sum}_{x} \Bigl\{ 
\mathop{\sum}_{M} \tau (x, M) + \varepsilon (d) \mathop{\sum}_{M'} \tau (x, M')  \Bigr\} . 
\end{equation*}
Here 
$x$ 
runs over the set
$O_{Hodge}(T(S, \alpha )) \backslash J^{d}(D_{T(S, \alpha )})$, 
where 
$J^{d}(D_{T(S, \alpha )})$ is the set defined in $(\ref{eqn:J^{d}(D)})$. 
The lattices 
$M$, $M'$ 
run over the sets 
$\mathcal{G}_{1}(M_{\varphi})$, 
$\mathcal{G}_{2}(M_{\varphi})$ 
respectively, 
where $M_{\varphi}$ is a lattice satisfying 
$\langle \lambda (x), \widetilde{NS}(S, B) \rangle \simeq U\oplus M_{\varphi}$. 
\end{prop}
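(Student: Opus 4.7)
The plan is to follow the proof of Theorem \ref{the formula} but replace the two bijectivity results (Propositions \ref{bijective if untwisted 1} and \ref{bijective if untwisted 2}) with the corresponding injectivity results (Lemmas \ref{injective 1} and \ref{injective 2}), which hold unconditionally. Write $T := T(S, \alpha)$. First, I would decompose
\[
\# {\rm FM\/}^{d}(S, \alpha) = \sum_{[(S_{1}, \alpha _{1})] \in \mathcal{FM}^{d}(S, \alpha)} \# \pi ^{-1}\Bigl( \pi((S_{1}, \alpha _{1}))\Bigr),
\]
and use Lemma \ref{injective 1}, together with the inclusion ${\rm Im\/}(\mu ) \subset O_{Hodge}(T) \backslash J^{d}(D_{T})$, to conclude that the index set injects into $O_{Hodge}(T) \backslash J^{d}(D_{T})$, so the outer sum can be enlarged to run over the latter without decreasing the total.

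For each partner $(S_{1}, \alpha _{1})$ with $\mu ([(S_{1},\alpha _{1})]) = [x]$, Lemma \ref{injective 2} gives an injection
\[
\nu : \pi ^{-1}\Bigl( \pi((S_{1}, \alpha _{1}))\Bigr) \hookrightarrow \Gamma (S_{1}, \alpha _{1})^{+} \backslash {\rm Emb\/}(U, \widetilde{NS}(S_{1})),
\]
so the inner term can be replaced by the cardinality of the right-hand side. The remaining task is to identify this $\Gamma ^{+}$-orbit count with the lattice-theoretic sum involving $\mathcal{G}_{1}$ and $\mathcal{G}_{2}$.

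To do so, I would invoke Section 3.3: each $\Gamma (S_{1}, \alpha _{1})$-orbit in ${\rm Emb\/}(U, \widetilde{NS}(S_{1}))$ breaks into at most two $\Gamma (S_{1}, \alpha _{1})^{+}$-orbits. When $d \leq 2$ one always has $\Gamma \cdot \varphi = \Gamma ^{+}\cdot \varphi $; when $d \geq 3$, by Lemma \ref{same orbit} combined with Proposition \ref{anti-symplectic}, the $\Gamma $-orbit splits in two precisely when $\varphi (U)^{\perp}\cap \widetilde{NS}(S_{1}) \in \mathcal{G}_{2}(NS(S_{1}))$. Coupling this with Proposition \ref{number of orbits}, which refines the $\Gamma $-orbit count according to the genus class of $\varphi (U)^{\perp}\cap \widetilde{NS}(S_{1}) \in \mathcal{G}(NS(S_{1})) = \mathcal{G}_{1} \sqcup \mathcal{G}_{2}$, one obtains
\[
\# \Gamma (S_{1}, \alpha _{1})^{+}\backslash {\rm Emb\/}(U, \widetilde{NS}(S_{1})) = \sum_{M \in \mathcal{G}_{1}(NS(S_{1}))} \tau (x, M) + \varepsilon (d) \sum_{M' \in \mathcal{G}_{2}(NS(S_{1}))} \tau (x, M').
\]
Summing over $[x]$ yields the stated bound.

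The main technical obstacle is the coherent bookkeeping between three different lattice-theoretic models: the abstract transcendental data $(T_{x}, \alpha _{x})$ used to define $\tau (x, M)$, the concrete pair $(T(S_{1}, \alpha _{1}), \alpha _{1})$ attached to a given partner, and the Neron-Severi model $\widetilde{NS}(S_{1})$ versus $\langle \lambda(x), \widetilde{NS}(S, B)\rangle$. One needs to verify, using the identifications set up at the end of Section 3.1, that the $\Gamma (S_{1}, \alpha _{1})^{+}$-action on embeddings corresponds under these identifications to the $O_{Hodge}(T_{x}, \alpha _{x})$-action on $O(D_{M})$ appearing in the definition of $\tau (x, M)$, and moreover that the genus $\mathcal{G}(NS(S_{1}))$ coincides with $\mathcal{G}(M_{\varphi})$ whenever $\widetilde{NS}(S_{1}) \simeq \langle \lambda(x), \widetilde{NS}(S, B)\rangle$. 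Once these matchings are in place, the counting argument goes through essentially verbatim from the untwisted case, losing only equality (via the failures of surjectivity of $\mu $ and $\nu $) but preserving the direction of the inequality.
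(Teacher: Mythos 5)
Your proposal is correct and follows essentially the same route as the paper, which deduces the inequality by assembling the unconditional injectivity of $\mu$ and $\nu$ (Lemmas \ref{injective 1} and \ref{injective 2}), the inclusion ${\rm Im\/}(\mu )\subset O_{Hodge}(T)\backslash J^{d}(D_{T})$, the orbit-splitting analysis of Section 3.3, and Proposition \ref{number of orbits}. The bookkeeping issues you flag are exactly the identifications the paper sets up tacitly at the end of Section 3.1, so nothing is missing.
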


\begin{corollary}\label{Jacobian}
When $\widetilde{NS}(S, B)$ contains $U \oplus U$, 
then 
\[
\# {\rm FM\/}^{d}(S, \alpha ) = 
\# \left( O_{Hodge}(T(S, \alpha )) \backslash I^{d}(D_{T(S, \alpha )}) \right) .
\]
\end{corollary}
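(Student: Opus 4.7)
The plan is to derive Corollary \ref{Jacobian} from Proposition \ref{the upper bound} by showing that under the hypothesis $\widetilde{NS}(S,B) \supseteq U \oplus U$ the bound is actually attained and the double sum on the right-hand side collapses term by term.

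First, I would show that $(S,\alpha)$ admits an untwisted FM partner. The second $U$-summand of $\widetilde{NS}(S,B)$ contains primitive isotropic vectors of divisibility one, and Yoshioka's theorem produces a fine moduli space of Gieseker-stable twisted sheaves on $(S,\alpha)$ with such a Mukai vector; this moduli space is an untwisted $K3$ surface derived-equivalent to $(S,\alpha)$. With an untwisted partner in hand, Propositions \ref{bijective if untwisted 1} and \ref{bijective if untwisted 2} apply: the map $\mu$ is bijective, and $\nu$ is bijective on each fiber of $\pi$, so the inequality of Proposition \ref{the upper bound} becomes an equality. Moreover, for every $x \in I^{d}(D_{T(S,\alpha)})$, the overlattice $\langle \lambda(x), \widetilde{NS}(S,B)\rangle$ still contains $U \oplus U$, so trivially admits an embedding of $U$; hence $J^{d}(D_{T(S,\alpha)}) = I^{d}(D_{T(S,\alpha)})$.

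Next I would analyze the inner sum $\sum_{M} \tau(x,M) + \varepsilon(d)\sum_{M'} \tau(x,M')$ for a fixed $x$. Writing $\langle \lambda(x), \widetilde{NS}(S,B)\rangle = U \oplus M_{\varphi}$, the hypothesis forces $M_{\varphi}$ to contain an orthogonal copy of $U$ as well, so $\mathrm{rk}(M_{\varphi}) \geq l(D_{M_{\varphi}}) + 2$. By Proposition \ref{Nikulin2}, $\mathcal{G}(M_{\varphi}) = \{M_{\varphi}\}$ and $r_{M_{\varphi}} : O(M_{\varphi}) \to O(D_{M_{\varphi}})$ is surjective, so $\tau(x, M_{\varphi}) = \#(O_{Hodge}(T_{x},\alpha_{x}) \backslash \{\ast\}) = 1$. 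To place $M_{\varphi}$ in $\mathcal{G}_1$ rather than $\mathcal{G}_2$, I would exhibit the involution acting as $-\mathrm{id}$ on the $U$-summand of $M_{\varphi}$ and as $\mathrm{id}$ on its orthogonal complement: it lies in $O(M_{\varphi})_{0}$ because $U$ is unimodular, and it reverses the orientation of $M_{\varphi}$ because $b_{+}(M_{\varphi}) = 1$ is odd. Hence $O(M_{\varphi})_{0}^{+} \neq O(M_{\varphi})_{0}$, so $M_{\varphi} \in \mathcal{G}_{1}(M_{\varphi})$ and $\mathcal{G}_{2}(M_{\varphi}) = \emptyset$; in particular the $\varepsilon(d)$-term vanishes regardless of $d$.

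Combining these observations, the right-hand side of Proposition \ref{the upper bound} reduces to $\sum_{[x]} 1$ where $[x]$ runs over $O_{Hodge}(T(S,\alpha)) \backslash I^{d}(D_{T(S,\alpha)})$, yielding the claimed equality. The main obstacle I anticipate is the first step, namely producing an untwisted FM partner from the purely lattice-theoretic hypothesis $\widetilde{NS}(S,B) \supseteq U \oplus U$; this is not elementary and depends on the moduli-theoretic input of Yoshioka and Huybrechts-Stellari rather than on pure lattice manipulation. The remaining steps are then direct applications of Nikulin's results on genera and discriminant forms together with the orientation argument for $\mathcal{G}_{1}$ versus $\mathcal{G}_{2}$.
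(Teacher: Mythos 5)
Your proof is correct and follows essentially the same route as the paper: obtain an untwisted FM partner from the hypothesis, then use Proposition \ref{Nikulin2} to force $\mathcal{G}(M_x)=\{M_x\}$ and $\tau(x,M_x)=1$, and the isometry $-{\rm id}_U\oplus{\rm id}_{U^\perp}$ to place everything in $\mathcal{G}_1$ so the $\varepsilon(d)$-term disappears. The only (immaterial) divergence is that the paper produces the untwisted partner lattice-theoretically, via $U^{\perp}\cap\widetilde{H}(S,B,\Z)$ and the surjectivity of the period map, and then applies Theorem \ref{the formula} to that partner directly, whereas you invoke Yoshioka's moduli construction and argue that the bound in Proposition \ref{the upper bound} is attained.
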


\begin{proof}
By considering 
$U^{\perp} \cap \widetilde{H}(S, B, {\Z})$, 
one observes the existence of 
an untwisted FM partner 
$S' \in {\rm FM\/}^{1}(S, \alpha )$. 
We have a Hodge isometry 
$T(S, \alpha ) \simeq T(S')$. 
Since 
$\widetilde{NS}(S')$ contains $U \oplus U$, 
then 
$NS(S')$ contains $U$. 
Now we apply Theorem \ref{the formula} to $S'$. 
For 
$x \in I^{d}(D_{T(S')})$, 
the lattice 
$M_{x}:=\langle \lambda (x), NS(S')\rangle $ 
admits an embedding of $U$ 
so that 
$\mathcal{G}(M_{x})=\{ M_{x} \} $ and 
$\tau (x, M_{x})=1$ 
by Proposition \ref{Nikulin2}. 
Then  
$\mathcal{G}(M_{x})=\mathcal{G}_{1}(M_{x})$ 
due to the existence of the isometry 
$-{\rm id\/}_{U}\oplus {\rm id\/}_{U^{\perp}}$. 
\end{proof}

By \cite{Ni}, 
$\widetilde{NS}(S, B)$  admits an embedding of $U \oplus U$ 
if ${\rm rk\/}(NS(S)) \geq 13$.

\begin{corollary}\label{2-elementary}
When the lattice $NS(S)$ is $2$-elementary, 
i.e, $D_{NS(S)}\simeq ({\Z}/2{\Z})^{a}$, 
then  
$\# {\rm FM\/}^{d}(S) = 0$ 
unless 
$d=1, 2$. 
If $d=1, 2$, 
we have 
\[
\# {\rm FM\/}^{d}(S) = \# \left( O_{Hodge}(T(S)) \backslash I^{d}(D_{T(S)}) \right) .
\]
\end{corollary}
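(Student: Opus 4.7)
The plan is to exploit the 2-elementarity of $D_{NS(S)}$, which propagates through the constructions in Theorem \ref{the formula} and collapses the formula to a single term per orbit.

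First, I would establish the vanishing $\mathrm{FM}^d(S) = \emptyset$ for $d \geq 3$. By Proposition \ref{Nikulin1}(1) applied to the primitive embedding $NS(S) \hookrightarrow \Lambda_{K3}$ with orthogonal complement $T(S)$, there is an isomorphism $(D_{T(S)}, q_{T(S)}) \simeq (D_{NS(S)}, -q_{NS(S)})$. Hence $D_{T(S)}$ is also 2-elementary, so $I^d(D_{T(S)}) = \emptyset$ whenever $d \geq 3$. The injectivity of $\mu$ (Lemma \ref{injective 1}) then forces $\mathcal{FM}^d(S) = \emptyset$, and surjectivity of the quotient $\pi$ gives $\mathrm{FM}^d(S) = \emptyset$.

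For $d \in \{1,2\}$ I would apply Theorem \ref{the formula}. Since $\varepsilon(d) = 1$, the formula collapses to
\[
\#\mathrm{FM}^d(S) \; = \; \sum_{x}\ \sum_{M \in \mathcal{G}(M_x)} \tau(x, M),
\]
where $x$ runs over $O_{Hodge}(T(S)) \backslash I^d(D_{T(S)})$ and $M_x := \langle \lambda(x), NS(S)\rangle$. The goal is to show each inner sum equals $1$. The discriminant form $D_{M_x} \simeq \langle \lambda(x)\rangle^{\perp}/\langle \lambda(x)\rangle$ is a subquotient of $D_{NS(S)}$, hence itself 2-elementary, and $M_x$ is indefinite of signature $(1,\rho(S)-1)$ whenever $\rho(S) \geq 2$. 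By Nikulin's structure theorem for even 2-elementary lattices (\cite{Ni}), $|\mathcal{G}(M_x)| = 1$ and the map $r_{M_x} : O(M_x) \to O(D_{M_x})$ is surjective. Consequently
\[
\tau(x, M_x) \; = \; \#\bigl( O_{Hodge}(T_x, \alpha_x) \backslash O(D_{M_x}) / O(D_{M_x}) \bigr) \; = \; 1,
\]
giving the desired equality after summing over $x$. The degenerate case $\rho(S) = 1$ forces $NS(S) = \Z H$ with $(H,H) = 2$; here $I^2(D_{T(S)})$ is empty and the $d = 1$ count reduces to a trivial rank-one calculation with $O(D_{M_x}) = \{\mathrm{id}\}$.

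The main obstacle is the appeal to Nikulin's structural theorem for 2-elementary lattices: the rank criterion $\mathrm{rk}(M_x) \geq l(D_{M_x}) + 2$ of Proposition \ref{Nikulin2} is not always available (notably when $d = 1$ and $NS(S)$ has rank equal to that of its 2-elementary discriminant), so one must invoke the sharper result specific to 2-elementary lattices in \cite{Ni}, which yields uniqueness of the genus and surjectivity of $r_{M_x}$ unconditionally in the indefinite case.
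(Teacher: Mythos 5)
Your proposal is correct and follows essentially the same route as the paper's (very brief) proof: both reduce to the observation that $M_{x}=\langle \lambda(x), NS(S)\rangle$ is again $2$-elementary, and then invoke Nikulin's classification of indefinite even $2$-elementary lattices to get $\mathcal{G}(M_{x})=\{M_{x}\}$ and surjectivity of $r_{M_{x}}$, hence $\tau(x,M_{x})=1$. Your explicit remarks on the vanishing for $d\geq 3$, on why Proposition \ref{Nikulin2} alone may not suffice, and on the degenerate case $\rho(S)=1$ are all details the paper leaves implicit, but they do not change the argument.
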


\begin{proof}
Because the lattice 
$M_{x}:=\langle \lambda (x), NS(S)\rangle $ 
is also $2$-elementary, 
$\mathcal{G}(M_{x})=\{ M_{x} \} $ and 
$\tau (x, M_{x})=1$ 
by \cite{Ni}. 
\end{proof}

\section{Twisted Fourier-Mukai partners of a $K3$ surface of Picard number $1$}

Let $S$ be a $K3$ surface with $NS(S)={\Z}H$, $(H, H)=2n$. 
In this section 
we shall describe all twisted FM partners of $S$ 
(up to isomorphism) 
as moduli spaces of stable sheaves on $S$ with explicit Mukai vectors, 
endowed with natural twistings. 
First of all, 
we calculate the twisted FM numbers by Theorem \ref{the formula}.

\begin{prop}\label{(d, n) number}
For a natural number $d$, 
we have 
$\# {\rm FM\/}^{d}(S) = 0$ 
unless 
$d^{2}|n$. 
If $d^{2}|n$, then 
\[
\# {\rm FM\/}^{d}(S) = \left\{
\begin{array}{rl}
\varphi (d)\cdot 2^{\tau (d^{-2}n)-2} & \: \: \: d\geq 3, d^{2}=n, \\ 
\varphi (d)\cdot 2^{\tau (d^{-2}n)-1} & \: \: \: d=1, 2 \; {\rm or\/}\; d^{2}<n.  
\end{array} 
\right. 
\]
Here 
$\varphi $ is the Euler function 
and 
$\tau (m)$ is the number of the prime factors of $m$. 
We define 
$\tau (1)=1$. 
\end{prop}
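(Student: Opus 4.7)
The plan is to apply Theorem \ref{the formula} directly, making each ingredient completely explicit under the hypothesis $NS(S) = \mathbb{Z}H$, $(H,H) = 2n$. First I would describe the discriminant form: both $D_{NS(S)}$ and $D_{T(S)}$ are cyclic of order $2n$ with quadratic form $q(\bar k) = \pm k^2/(2n) \bmod 2\mathbb{Z}$. An element of order $d$ is of the shape $(2n/d)m$ with $m \in (\mathbb{Z}/d\mathbb{Z})^{\times}$, and the isotropy condition $q = 0$ reduces to $d^2 \mid n$. So $I^{d}(D_{T(S)})$ is empty unless $d^{2}\mid n$, and in that case has cardinality $\varphi(d)$. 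Since $\mathrm{rk}\,T(S) = 21$ is odd, the bullet list in the introduction gives $O_{Hodge}(T(S)) = \{\pm\mathrm{id}\}$, which acts on $I^{d}$ by $m \mapsto \pm m$, yielding $\varphi(d)/2$ orbits for $d \geq 3$ and $\varphi(d)$ orbits for $d \leq 2$.

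Next, for each representative $x$, a short computation shows $\langle \lambda(x), NS(S)\rangle = \mathbb{Z}(H/d)$, a rank $1$ positive-definite lattice of square $2n/d^{2} = 2n'$; hence its genus is the singleton $\{M_x\}$. The membership $M_x \in \mathcal{G}_1$ versus $\mathcal{G}_2$ is decided by the criterion from Section 2.1 (valid since $b_+ = 1$ is odd): we have $M_x \in \mathcal{G}_1$ iff $-\mathrm{id}_{D_{M_x}} \in r(O(M_x)^{+})$. But $O(M_x) = \{\pm\mathrm{id}\}$ and $O(M_x)^{+} = \{\mathrm{id}\}$, so the condition becomes that $D_{M_x} = \mathbb{Z}/2n'\mathbb{Z}$ is $2$-elementary, i.e., $n' = 1$, i.e., $d^2 = n$. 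Similarly, using the description of $O_{Hodge}(T_x, \alpha_x)$ from Section 2.2, the condition that $-\mathrm{id}$ fix $\alpha_x^{-1}(\bar 1) \in \mathbb{Z}/d\mathbb{Z}$ holds iff $d \leq 2$; thus $O_{Hodge}(T_x, \alpha_x) = \{\pm\mathrm{id}\}$ for $d \leq 2$ and $\{\mathrm{id}\}$ for $d \geq 3$.

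The one genuinely arithmetic input is $|O(D_{M_x})|$. Writing out the preservation condition, $O(D_{M_x})$ is the set of $u \in (\mathbb{Z}/2n'\mathbb{Z})^{\times}$ with $u^{2} \equiv 1 \pmod{4n'}$; a standard CRT / $2$-adic count gives $|O(D_{M_x})| = 2^{\tau(n')}$ (number of distinct prime factors, with $2^{\tau(1)} = 1$). Because $\pm\mathrm{id}_{D_{M_x}}$ is central in $O(D_{M_x})$, the double coset computation collapses and $\tau(x, M_x) = |O(D_{M_x})|/2 = 2^{\tau(n')-1}$ for $n' > 1$, while $\tau(x, M_x) = 1$ for $n' = 1$ (since then $O(D_{M_x}) = 1$ and left/right actions are trivial).

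Finally I would assemble via Theorem \ref{the formula}. Four cases according to $d \lessgtr 2$ and $d^{2} \lessgtr n$ combine the orbit count $\{\varphi(d), \varphi(d)/2\}$, the single nonzero bracket (either $\tau(x,M_x)$ if $M_x \in \mathcal{G}_1$, or $\varepsilon(d)\tau(x,M_x)$ if $M_x \in \mathcal{G}_2$), and the values of $\varepsilon(d)$; a short case check shows each case produces the stated formula under the convention $\tau(1) = 1$. The main obstacles are the orientation bookkeeping — correctly matching $\mathcal{G}_1$ versus $\mathcal{G}_2$ against the $\varepsilon(d)$ factor so that the two $\varphi(d)/2$ cancellations appear in exactly the right case — and the small number-theoretic lemma $|O(D_{M_x})| = 2^{\tau(n')}$; everything else is a direct substitution into Theorem \ref{the formula}.
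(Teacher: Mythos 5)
Your proposal is correct and follows essentially the same route as the paper's proof: identify $I^{d}(D_{NS(S)})$ and the $O_{Hodge}(T(S))=\{\pm\mathrm{id}\}$ orbit count, note that the genus of $\mathbb{Z}(H/d)$ is a singleton lying in $\mathcal{G}_{1}$ exactly when $d^{2}=n$, compute $O_{Hodge}(T_{x},\alpha_{x})$, $O(M_{x})$ and $O(D_{M_{x}})\simeq(\mathbb{Z}/2\mathbb{Z})^{\tau(d^{-2}n)}$, and substitute into Theorem \ref{the formula}. The only difference is that you supply the CRT argument for $\#O(D_{M_{x}})=2^{\tau(d^{-2}n)}$, which the paper asserts without proof; all cases check out.
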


\begin{proof}
We have 
$D_{NS(S)}=\langle \frac{H}{2n}\rangle \simeq {\Z}/2n{\Z}$ 
and 
$O_{Hodge}(T(S))=\{ \pm {\rm id\/} \}$ 
(cf. \cite{H-L-O-Y}). 
Since 
\[
I^{d}(D_{NS(S)})\ne \phi \: \: \Leftrightarrow \: \: 
(\frac{H}{d}, \frac{H}{d})=\frac{2n}{d^{2}} \in 2{\Z} \: \: \Leftrightarrow \: \: 
d^{2}|n, 
\]
it follows that 
$\# {\rm FM\/}^{d}(S) = 0$ 
unless 
$d^{2}|n$.

Assume that $d^{2}|n$. 
Then we have 
$I^{d}(D_{NS(S)})=\{ k\frac{H}{d}, \: k\in ({\Z}/d{\Z})^{\times} \}$  
so that 
\[
\# ( O_{Hodge}(T(S)) \backslash I^{d}(D_{NS(S)}) ) = 
\left\{ 
\begin{array}{cl}
1 & \: d=1, 2,  \\ 
\frac{1}{2}\varphi (d) & \: d\geq 3.  
\end{array}
\right. 
\]

For 
$d\geq 3$ and $k \in ({\Z}/d{\Z})^{\times}$, 
we have 
$\langle {\Z}H, k\frac{H}{d} \rangle = {\Z}\frac{H}{d}$ 
and 
\[
\mathcal{G}\Bigl( {\Z}\frac{H}{d}\Bigr) = 
\Bigl\{ {\Z}\frac{H}{d} \Bigr\} = 
\left\{ 
\begin{array}{ll}
\mathcal{G}_{1}({\Z}\frac{H}{d}) & \: d^{2}=n,  \\ 
\mathcal{G}_{2}({\Z}\frac{H}{d}) & \: d^{2}<n .  
\end{array}
\right. 
\] 
Because  
$O_{Hodge}(T_{k\frac{H}{d}}, \alpha _{k\frac{H}{d}}) = \{ {\rm id\/} \}$,  
$O({\Z}\frac{H}{d}) = \{ \pm {\rm id\/} \}$ 
and 
\begin{equation}\label{eqn:isometry of disc form}
O( D_{{\Z}\frac{H}{d}} ) \simeq 
\left\{ 
\begin{array}{ll}
\{ {\rm id\/} \} & \: d^{2}=n,  \\ 
({\Z}/2{\Z})^{\tau (d^{-2}n)} & \: d^{2}<n ,   
\end{array}
\right. 
\end{equation}  
it follows from Theorem \ref{the formula} that 
\[
\# {\rm FM\/}^{d}(S) = \left\{
\begin{array}{rl}
\frac{1}{2}\varphi (d) \cdot 1 \cdot 2^{\tau (d^{-2}n)-1} & \: \: \:  d^{2}=n, \\ 
\frac{1}{2}\varphi (d) \cdot 2 \cdot 2^{\tau (d^{-2}n)-1} & \: \: \:  d^{2}<n.  
\end{array} 
\right. 
\]

Let $d \leq 2$. 
Since   
$O_{Hodge}(T_{\frac{H}{d}}, \alpha _{\frac{H}{d}}) = 
O({\Z}\frac{H}{d}) = \{ \pm {\rm id\/} \}$ 
and the isomorphisms (\ref{eqn:isometry of disc form}) still hold, 
we have  
$
\# {\rm FM\/}^{d}(S) =  
2^{\tau (d^{-2}n)-1} =
\varphi (d)\cdot 2^{\tau (d^{-2}n)-1}
$ 
by Theorem \ref{the formula}. 
\end{proof}

Now we shall construct the moduli spaces. 
Since the primitive embeddings of the lattice ${\Z}H$ 
into the $K3$ lattice $\Lambda _{K3}$ are unique modulo $O(\Lambda _{K3})$, 
we fix an isometry 
$H^{2}(S, {\Z}) \simeq \Lambda _{K3} = U^{3}\oplus E_{8}^{2}$ 
so that 
$H$ is written as $H=nu+v$, 
where 
$\{ u, v\} $ is a standard basis of the first $U$. 
Then 
the transcendental lattice $T(S)$ is written as 
$T(S)={\Z}(nu-v)\oplus U^{2}\oplus E_{8}^{2}$. 
Set 
$G:=nu-v \in T(S)$.

For 
a natural number $d$ with $d^{2}|n$, 
let 
$d^{-2}n = \mathop{\prod}_{i=1}^{\tau (d^{-2}n)} p_{i}^{e_{i}}$ 
be the prime decomposition. 
When $d^{2}=n$, 
we put $p_{1}=e_{1}=1$. 
Set 
\[
\Sigma := 
\Bigl\{ 
\Bigl. 
\{ i_{1}, \cdots , i_{a} \} \subset \{ 1, 2, \cdots , \tau (d^{-2}n) \} 
\: \Bigr| \:  
\{ i_{1}, \cdots , i_{a} \} \ne \phi , \: 
 \mathop{\prod}_{\{ i_{1}, \cdots , i_{a} \} } p_{i}^{2e_{i}} \geq \frac{n}{d^{2}} \: 
\Bigr\} . 
\]
We have $\# \Sigma = 2^{ \tau (d^{-2}n) -1}$. 
For 
$\sigma = \{ i_{1}, \cdots , i_{a} \} \in \Sigma $,  
set 
$r_{\sigma} := \mathop{\prod}_{i \in \sigma } p_{i}^{e_{i}}$
and 
$s_{\sigma} := r_{\sigma}^{-1}d^{-2}n$. 
Then 
$r_{\sigma}$ is coprime to $s_{\sigma}$. 
We have 
$r_{\sigma} > s_{\sigma}$ if $d^{2}<n$. 
Let 
$k \in ({\Z}/d{\Z})^{\times}$. 
By the arithmetic progression, 
we can find a natural number 
$\tilde{k} \in {\N}$ coprime to $2n$ with $\tilde{k} \equiv k \in {\Z}/d{\Z}$. 
For example, 
we may take $\tilde{k}=1$ if $k=\bar{1}$. 
Fix such $\tilde{k}$ for each $k \in ({\Z}/d{\Z})^{\times}$. 
For 
$(\sigma , k) \in \Sigma \times ({\Z}/d{\Z})^{\times}$, 
we define 
\begin{equation}\label{eqn:Mukai vector} 
v_{\sigma , k} := 
                 ( dr_{\sigma}, \tilde{k}H, \tilde{k}^{2}ds_{\sigma} ) \in \widetilde{NS}(S).  
\end{equation}
Then 
$v_{\sigma , k}$ is a primitive isotropic vector in $\widetilde{NS}(S)$ 
with 
${\rm div\/}(v_{\sigma , k} )=d$.

Let 
$M_{\sigma , k} := M_{H}(v_{\sigma , k})$ 
be the coarse moduli space of 
Gieseker stable sheaves on $S$ with Mukai vector $v_{\sigma , k}$. 
It follows from \cite{Mu} that 
$M_{\sigma , k}$ is non-empty and is a $K3$ surface. 
Let 
$\alpha _{\sigma , k} \in {\rm Br\/}(M_{\sigma , k})$ 
be the obstruction to the existence of  a universal sheaf (\cite{Ca2}) 
with a B-field lift $B_{\sigma , k} \in H^{2}(M_{\sigma , k}, {\Q})$. 
By \cite{H-S1}, 
there exists an orientation-preserving Hodge isometry 
\begin{equation}\label{eqn:Hodge isometry 1}
\widetilde{\Phi} : 
\widetilde{H}(M_{\sigma , k}, B_{\sigma , k}, {\Z}) 
\simeq 
\widetilde{H}(S, {\Z}) 
\: \: \: {\rm with\/} \: \: 
(0, 0, 1) \mapsto v_{\sigma , k} , 
\end{equation}
which 
induces a Hodge isometry 
\begin{equation}\label{eqn:Hodge isometry 2} 
H^{2}(M_{\sigma , k}, {\Z}) 
\simeq 
\Bigl( v_{\sigma , k}^{\perp}\cap \widetilde{H}(S, {\Z}) \Bigr) / {\Z}v_{\sigma , k} .
\end{equation}
(We remark that the Hodge isometry (\ref{eqn:Hodge isometry 2}) was proved first by \cite{Mu}.)

Using (\ref{eqn:Hodge isometry 2}), 
we shall calculate the isometry 
\[
\lambda _{\sigma , k} := 
\lambda _{H^{2}(M_{\sigma , k}, {\Z})} : 
(D_{NS(M_{\sigma , k})}, q) \simeq (D_{T(M_{\sigma , k})}, -q) . 
\]
Firstly, 
direct calculations 
show that 
\begin{eqnarray*}
     NS(M_{\sigma , k}) 
& \simeq  &
    \Bigl( v_{\sigma , k}^{\perp}\cap \widetilde{NS}(S) \Bigr) / {\Z}v_{\sigma , k} \\
& \simeq & 
  \left( v_{\sigma , k}^{\perp} \cap \Bigl\langle \frac{v_{\sigma , k}}{d}, \widetilde{NS}(S) \Bigr\rangle \right) 
   / {\Z}\frac{v_{\sigma , k}}{d} \\ 
&   =    & 
    \left( {\Z}\frac{v_{\sigma , k}}{d} \oplus {\Z}( 0, -\frac{H}{d}, -2s_{\sigma}\tilde{k} ) \right)  
    / {\Z}\frac{v_{\sigma , k}}{d} . 
\end{eqnarray*}
We define  
\[
H_{\sigma , k} := \Bigl[ ( 0, -\frac{H}{d}, -2s_{\sigma}\tilde{k} ) \Bigr] \: \: \in NS(M_{\sigma , k}).  
\]
We have 
$NS(M_{\sigma , k}) = {\Z}H_{\sigma , k}$ 
and 
$
\tilde{k}H_{\sigma , k} = [ ( r_{\sigma}, 0, -s_{\sigma}\tilde{k}^{2} ) ]  
$. 
Since 
the Hodge isometry (\ref{eqn:Hodge isometry 1}) 
is orientation-preserving,  
$H_{\sigma , k}$ 
is the ample generator of  $NS(M_{\sigma , k})$. 
For 
the transcendental lattice 
we have 
\begin{eqnarray*}
T(M_{\sigma , k}) 
& \simeq & 
\overline{T(S)\oplus {\Z}v_{\sigma , k}} / {\Z}v_{\sigma , k} \\
& = & 
\Bigl( 
\Bigl\langle {\Z}v_{\sigma , k} \oplus {\Z}G , \frac{v_{\sigma , k}+\tilde{k}G}{d} \Bigr\rangle
\oplus U^{2} \oplus E_{8}^{2} 
\Bigr) 
/ {\Z}v_{\sigma , k} .
\end{eqnarray*}
Here 
$\overline{L}$ means 
the primitive hull of the lattice $L$ 
in $\widetilde{H}(S, {\Z})$. 
Since 
$\lambda _{\widetilde{H}(S, {\Z})} (\frac{v_{\sigma , k}}{d}) = \frac{\tilde{k}G}{d} \in D_{T(S)}$, 
it follows from 
Lemma \ref{twist and disc form} and 
the Hodge isometry (\ref{eqn:Hodge isometry 1}) 
that 
the twisting $\alpha _{\sigma , k}$ is given by 
\begin{equation}\label{twisting}
\alpha _{\sigma , k} : 
T(M_{\sigma , k}) 
\stackrel{\widetilde{\Phi} \circ {\rm e\/}^{B_{\sigma , k}}}{\longrightarrow}
\Bigl\langle \frac{G}{d}, T(S) \Bigr\rangle / T(S) 
\simeq 
\Bigl\langle \frac{-\tilde{k}G}{d} \Bigr\rangle 
\simeq 
{\Z}/d{\Z} . 
\end{equation} 
Via 
the Hodge isometry (\ref{eqn:Hodge isometry 2}), 
we obtain a Hodge isometry 
\begin{equation}\label{eqn:Hodge isometry 3}
g_{\sigma , k} := 
\widetilde{\Phi} \circ {\rm e\/}^{B_{\sigma , k}} : 
T(M_{\sigma , k}) \simeq \Bigl\langle \frac{G}{d}, T(S) \Bigr\rangle , 
\: \: \: 
g_{\sigma , k} \Bigl( \Bigl[ \frac{\tilde{k}G+v_{\sigma , k}}{d} \Bigr] \Bigr) = \frac{\tilde{k}G}{d} . 
\end{equation}
Set 
$G_{\sigma , k} := g_{\sigma , k}^{-1} (\frac{G}{d})$, 
which is a primitive vector in 
$T(M_{\sigma , k})$. 
From  
(\ref{eqn:Hodge isometry 3}), 
for 
$(\sigma , k), (\sigma ', k') \in \Sigma \times ({\Z}/d{\Z})^{\times}$ 
we have  
a Hodge isometry 
\begin{equation}\label{eqn:Hodge isometry 4}
g_{\sigma , k}^{\sigma ', k'} : = 
g_{\sigma ', k'}^{-1} \circ g_{\sigma , k} : 
T(M_{\sigma , k}) \simeq T(M_{\sigma ', k'}),  
\: \: \:  \: \: 
g_{\sigma , k}^{\sigma ', k'} ( G_{\sigma , k} ) 
= G_{\sigma ', k'}. 
\end{equation}
Therefore  
we have an equivalence 
$D^{b}(M_{\sigma , k}) \simeq D^{b}(M_{\sigma ' , k'})$ 
by Proposition \ref{H-S}. 
Note that 
the Hodge isometries 
from $T(M_{\sigma , k})$ 
to  $T(M_{\sigma ', k'})$ 
are just 
$\{ \pm g_{\sigma , k}^{\sigma ', k'} \}$.

Now 
we calculate $\lambda _{\sigma , k}$ 
as follows. 
We have 
\begin{eqnarray*}
& &        D_{NS(M_{\sigma , k})} 
    \simeq 
         \Bigl\langle \frac{\tilde{k}H_{\sigma , k}}{2r_{\sigma}s_{\sigma}} \Bigr\rangle 
     =     
        \Bigl\langle ( \frac{1}{2s_{\sigma}}, 0, \frac{-\tilde{k}^{2}}{2r_{\sigma}} ) \Bigr\rangle , \\ 
& &        D_{T(M_{\sigma , k})} 
    \simeq 
        \Bigl\langle \frac{\tilde{k}G_{\sigma , k}}{2r_{\sigma}s_{\sigma}} \Bigr\rangle 
     =     
      \Bigl\langle ( \frac{1}{2s_{\sigma}}, \tilde{k}du, \frac{\tilde{k}^{2}}{2r_{\sigma}} ) \Bigr\rangle .
\end{eqnarray*}
Firstly 
assume that $r_{\sigma}s_{\sigma}$ is odd. 
With respect to orthogonal decompositions 
\[
D_{NS(M_{\sigma , k})} 
\simeq  
({\Z}/2{\Z}) \oplus \mathop{\bigoplus}_{i=1}^{\tau (r_{\sigma}s_{\sigma})} ({\Z}/p_{i}^{e_{i}}{\Z}), 
 \: \: \: \: \:  
D_{T(M_{\sigma , k})}  
\simeq  
({\Z}/2{\Z}) \oplus \mathop{\bigoplus}_{i=1}^{\tau (r_{\sigma}s_{\sigma})} ({\Z}/p_{i}^{e_{i}}{\Z}), 
\]
we write 
\begin{equation}
\frac{H_{\sigma , k}}{2r_{\sigma}s_{\sigma}} 
= ( \bar{1}, x_{\sigma , k}^{1}, \cdots , x_{\sigma , k}^{\tau (r_{\sigma}s_{\sigma})} ),  
\: \: \: \: \: 
\frac{G_{\sigma , k}}{2r_{\sigma}s_{\sigma}} 
= ( \bar{1}, y_{\sigma , k}^{1}, \cdots , y_{\sigma , k}^{\tau (r_{\sigma}s_{\sigma})} ). 
\end{equation}
Each 
$x_{\sigma , k}^{i}$, 
$y_{\sigma , k}^{i}$
are 
generators of ${\Z}/p_{i}^{e_{i}}{\Z}$. 
Since 
we have 
\begin{eqnarray*}
& & s_{\sigma}\frac{\tilde{k}H_{\sigma , k}}{2r_{\sigma}s_{\sigma}} + 
    s_{\sigma}\frac{\tilde{k}G_{\sigma , k}}{2r_{\sigma}s_{\sigma}} \; = \; 
                                                        [ (1, \tilde{k}ds_{\sigma}u, 0) ]
                                                \: \; \in  H^{2}(M_{\sigma , k}, {\Z}) , \\ 
& & - r_{\sigma}\frac{\tilde{k}H_{\sigma , k}}{2r_{\sigma}s_{\sigma}}  
    + r_{\sigma}\frac{\tilde{k}G_{\sigma , k}}{2r_{\sigma}s_{\sigma}} \; = \; 
                                                        [ (0, \tilde{k}dr_{\sigma}u, \tilde{k}^{2}) ]
                                                \: \; \in  H^{2}(M_{\sigma , k}, {\Z}) , 
\end{eqnarray*} 
the isometry 
$\lambda _{\sigma , k} = \lambda _{H^{2}(M_{\sigma , k}, {\Z})}$ 
is 
determined 
by 
\begin{equation}\label{eqn:patching} 
\lambda _{\sigma , k} ( x_{\sigma , k}^{i} ) = 
\left\{ 
\begin{array}{rl}
 y_{\sigma , k}^{i} & \: \: \:  i \in \sigma , \\ 
-y_{\sigma , k}^{i} & \: \: \:  i \notin \sigma .   
\end{array} 
\right. 
\end{equation}
If 
$r_{\sigma}s_{\sigma}$ is even, 
put $p_{1}=2$ and write 
\begin{eqnarray*}
\frac{H_{\sigma , k}}{2r_{\sigma}s_{\sigma}} 
= ( x_{\sigma , k}^{1}, \cdots , x_{\sigma , k}^{\tau (r_{\sigma}s_{\sigma})} ) 
& \in &
D_{NS(M_{\sigma , k})} 
\simeq  
({\Z}/2^{e_{1}+1}{\Z}) \oplus \mathop{\bigoplus}_{i=2}^{\tau (r_{\sigma}s_{\sigma})} ({\Z}/p_{i}^{e_{i}}{\Z}), \\ 
\frac{G_{\sigma , k}}{2r_{\sigma}s_{\sigma}} 
= ( y_{\sigma , k}^{1}, \cdots , y_{\sigma , k}^{\tau (r_{\sigma}s_{\sigma})} ) 
& \in & 
D_{T(M_{\sigma , k})} 
\simeq  
({\Z}/2^{e_{1}+1}{\Z}) \oplus \mathop{\bigoplus}_{i=2}^{\tau (r_{\sigma}s_{\sigma})} ({\Z}/p_{i}^{e_{i}}{\Z}). 
\end{eqnarray*}
Then 
$\lambda _{\sigma , k}$ is determined by 
the same equations as $(\ref{eqn:patching})$.

\begin{prop}\label{moduli(sigma, k)}
For the twisted $K3$ surfaces 
$(M_{\sigma , k}, \alpha _{\sigma , k})$ 
the followings hold. 

$(1)$  
The underlying $K3$ surfaces 
$\{ 
M_{\sigma , k} , \: 
(\sigma , k) \in \Sigma \times ({\Z}/d{\Z})^{\times} 
\}$
are derived equivalent to each other.  

$(2)$ 
We have 
$M_{\sigma , k} \simeq M_{\sigma ', k'}$ 
if and only if 
$\sigma = \sigma '$. 

$(3)$ 
We have 
$(M_{\sigma , k} , \alpha _{\sigma , k}) 
\simeq 
(M_{\sigma , 1} , k^{-1}\alpha _{\sigma , 1})$.  
\end{prop}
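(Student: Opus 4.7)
The plan is to establish the three parts in the order $(1), (3), (2)$, using $(3)$ to supply the easy direction of $(2)$. Part $(1)$ is immediate from the discussion preceding the proposition: the Hodge isometries $g_{\sigma , k}^{\sigma ', k'}$ of $(\ref{eqn:Hodge isometry 4})$, combined with Proposition \ref{H-S} in its untwisted form (Mukai--Orlov), give derived equivalences $D^{b}(M_{\sigma , k}) \simeq D^{b}(M_{\sigma ', k'})$.

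For $(3)$, I would first verify that the pullback of twistings matches. Using the definition $(\ref{twisting})$ together with the identity $G/d = (-\tilde{k})^{-1}\cdot(-\tilde{k}G/d)$ inside $\langle G/d, T(S)\rangle / T(S)$, a direct computation yields $\alpha _{\sigma , k}(G_{\sigma , k}) = -\tilde{k}^{-1} \equiv -k^{-1} \in {\Z}/d{\Z}$ and $\alpha _{\sigma , 1}(G_{\sigma , 1}) = -1$. Since $g_{\sigma , k}^{\sigma , 1}(G_{\sigma , k}) = G_{\sigma , 1}$ by $(\ref{eqn:Hodge isometry 4})$, this gives the equality $(g_{\sigma , k}^{\sigma , 1})^{\ast}(k^{-1}\alpha _{\sigma , 1}) = \alpha _{\sigma , k}$ of surjections to ${\Z}/d{\Z}$. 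I would then extend $g_{\sigma , k}^{\sigma , 1}$ to a Hodge isometry of $\widetilde{H}$ via Proposition \ref{Nikulin1}, pairing it with the isometry $\gamma : NS(M_{\sigma , k}) \to NS(M_{\sigma , 1})$ sending $H_{\sigma , k} \mapsto H_{\sigma , 1}$. The required discriminant-form compatibility reduces, under the cyclic decompositions, to matching $\lambda _{\sigma , k}(x_{\sigma , k}^{i}) = \epsilon _{i}^{\sigma} y_{\sigma , k}^{i}$ and $\lambda _{\sigma , 1}(x_{\sigma , 1}^{i}) = \epsilon _{i}^{\sigma} y_{\sigma , 1}^{i}$ from $(\ref{eqn:patching})$, which is immediate since the signs $\epsilon _{i}^{\sigma}$ depend only on $\sigma$. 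Restricting this Hodge isometry to $H^{2}$, and correcting by an element of the Weyl group $W(M_{\sigma , 1})$ (and by $-{\rm id\/}$ if necessary) to make it effective, Torelli (\cite{PS-S}, \cite{K3}) then supplies the desired isomorphism $(M_{\sigma , k}, \alpha _{\sigma , k}) \simeq (M_{\sigma , 1}, k^{-1}\alpha _{\sigma , 1})$.

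For $(2)$, the direction $(\Leftarrow)$ is immediate from $(3)$: $M_{\sigma , k} \simeq M_{\sigma , 1} \simeq M_{\sigma , k'}$. For $(\Rightarrow)$, suppose $M_{\sigma , k} \simeq M_{\sigma ', k'}$. Since ${\rm rk\/}(T(M_{\sigma , k})) = 21$ is odd, $O_{Hodge}(T(M_{\sigma , k})) = \{\pm {\rm id\/}\}$, and by Torelli this isomorphism restricts on $T$ to one of $\pm g_{\sigma , k}^{\sigma ', k'}$. Its extension to $\widetilde{H}$ demands, by Proposition \ref{Nikulin1}, compatibility with some $\pm {\rm id\/}$ on $NS = {\Z} H$. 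Reading this compatibility off $(\ref{eqn:patching})$, the sign pattern $(\epsilon _{i}^{\sigma})_{i}$ must agree with $(\epsilon _{i}^{\sigma '})_{i}$ up to an overall sign, forcing $\sigma '$ to equal $\sigma $ or its complement $\sigma ^{c}$. The defining inequality $r_{\sigma '} \geq s_{\sigma '}$ of $\Sigma $ excludes $\sigma ' = \sigma ^{c}$ (except when $\sigma = \sigma ^{c}$, which collapses to $\sigma = \sigma '$), so $\sigma = \sigma '$ as claimed.

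The main obstacle will be the two discriminant-form verifications — the Nikulin extension in $(3)$, and symmetrically the non-extension argument in the $(\Rightarrow)$ direction of $(2)$ — each requiring careful tracking of how $\pm g_{\sigma , k}^{\sigma ', k'}$ acts on the cyclic $p_{i}$-components of $D_{T(M_{\sigma , k})}$ and comparison with the sign patterns $\epsilon _{i}^{\sigma}$ imposed by $(\ref{eqn:patching})$; once these sign bookkeeping calculations are in place, Nikulin's theorem and the Torelli theorem finish the argument cleanly.
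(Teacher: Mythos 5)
Your proposal is correct and follows essentially the same route as the paper: part (1) via the Hodge isometries $g_{\sigma,k}^{\sigma',k'}$ and Mukai--Orlov, the forward direction of (2) via the compatibility of $\lambda_{\sigma,k}$ with $(\ref{eqn:patching})$ forcing $\sigma'=\sigma$ or $\sigma^{c}$ and the definition of $\Sigma$ excluding the complement, the converse via extending $\gamma_{\sigma,k}^{\sigma,1}\oplus g_{\sigma,k}^{\sigma,1}$ to an effective Hodge isometry, and (3) by reading the pullback of twistings off $(\ref{twisting})$. Your reordering $(1),(3),(2)$ and the more explicit sign bookkeeping are only presentational differences.
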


\begin{proof}
We already proved the assertion $(1)$  
using Mukai-Orlov's theorem.

Assume the existence of an effective Hodge isometry 
$\Phi : 
H^{2}(M_{\sigma , k}, {\Z}) 
\simeq 
H^{2}(M_{\sigma ', k'}, {\Z})$ 
for 
$(\sigma , k) , (\sigma ', k') \in \Sigma \times ({\Z}/d{\Z})^{\times}$. 
We must have 
$\Phi (H_{\sigma , k}) = H_{\sigma ', k'}$ 
and 
$\Phi (G_{\sigma , k}) = \pm G_{\sigma ', k'}$. 
Since 
the identity 
$
\lambda _{\sigma ', k'} \circ r(\Phi |_{NS(M_{\sigma , k})}) 
 = 
 r(\Phi |_{T(M_{\sigma , k})}) \circ \lambda _{\sigma , k} 
$ 
holds, 
it follows from  $(\ref{eqn:patching})$ 
that 
$\sigma = \sigma ' \; {\rm or\/} \; (\sigma ')^{c}$. 
By the definition of $\Sigma $ 
we have $\sigma = \sigma '$.

Conversely 
let $\sigma = \sigma '$. 
If we define 
$
\gamma _{\sigma , k}^{\sigma , 1} : 
NS(M_{\sigma , k}) \simeq NS(M_{\sigma , 1})
$ 
by
$
\gamma _{\sigma , k}^{\sigma , 1} (H_{\sigma , k}) 
= H_{\sigma , 1} 
$, 
then
$
\gamma _{\sigma , k}^{\sigma , 1} \oplus g_{\sigma , k}^{\sigma , 1}
$ 
extends to an effective Hodge isometry 
$
H^{2}(M_{\sigma , k}, {\Z}) 
\simeq 
H^{2}(M_{\sigma , 1}, {\Z})
$. 
Since  
the twistings 
$\alpha _{\sigma , k}$  
are given by (\ref{twisting}), 
we have 
$
( g_{\sigma , k}^{\sigma , 1} )^{\ast} \alpha _{\sigma , 1} 
 = k \alpha _{\sigma , k}
$, 
or equivalently, 
$
( g_{\sigma , k}^{\sigma , 1} )^{\ast} ( k^{-1}\alpha _{\sigma , 1} )
 = \alpha _{\sigma , k}
$.
\end{proof}

Finally, 
we obtain 
the following theorem. 
\begin{theorem}\label{(d, n) partners}
Let $S$ be a $K3$ surface 
with 
$NS(S)={\Z}H$, $(H, H)=2n$, 
and let $d$ be a natural number satisfying $d^{2}|n$. 

$(1)$ 
If $d^{2}<n$ or $d\leq 2$, 
then 
\begin{equation*} 
{\rm FM\/}^{d}(S) = \left\{ \left. ( M_{\sigma , k}, \alpha _{\sigma , k} ) \right| \: 
                            (\sigma , k) \in  \Sigma \times ({\Z}/d{\Z})^{\times} \right\} . 
\end{equation*}

$(2)$ 
Assume $d^{2}=n$ and $d\geq 3$. 
Choose a set 
$\{ j \} \subset ({\Z}/d{\Z})^{\times}$ 
of representatives of  
$({\Z}/d{\Z})^{\times} / \{ \pm {\rm id\/} \} $.  
then 
\begin{equation*} 
{\rm FM\/}^{d}(S) = \left\{ \left. ( M_{\sigma , k}, \alpha _{\sigma , k} ) \right| \: 
                            (\sigma , k) \in \Sigma \times \{ j \} \right\} . 
\end{equation*}
\end{theorem}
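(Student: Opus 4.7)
The plan is to count isomorphism classes among the twisted $K3$ surfaces $(M_{\sigma,k}, \alpha_{\sigma,k})$ using Proposition~\ref{moduli(sigma, k)}, then compare with $\#{\rm FM\/}^{d}(S)$ as given by Proposition~\ref{(d, n) number}. That each $(M_{\sigma,k}, \alpha_{\sigma,k})$ lies in ${\rm FM\/}^{d}(S)$ is already part of Proposition~\ref{moduli(sigma, k)}(1). An isomorphism $(M_{\sigma,k}, \alpha_{\sigma,k}) \simeq (M_{\sigma',k'}, \alpha_{\sigma',k'})$ forces $\sigma = \sigma'$ by part (2), and for fixed $\sigma$ part (3) identifies the pair with $(M_{\sigma,1}, k^{-1}\alpha_{\sigma,1})$. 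Hence it suffices to determine for which $c, c' \in (\Z/d\Z)^{\times}$ one has $(M_{\sigma,1}, c\alpha_{\sigma,1}) \simeq (M_{\sigma,1}, c'\alpha_{\sigma,1})$. Such an isomorphism is given by an automorphism $f$ of $M_{\sigma,1}$ satisfying $c' f^{\ast}\alpha_{\sigma,1} = c \alpha_{\sigma,1}$; since ${\rm rk\/}\, T(M_{\sigma,1}) = 21$ is odd, the rank-parity argument recalled in the introduction gives $O_{Hodge}(T(M_{\sigma,1})) = \{\pm {\rm id\/}\}$, so $f^{\ast}|_{T} = \pm {\rm id\/}$ and we conclude $c' = \pm c$, i.e., $k' = \pm k$.

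Next I would determine when the identification $k \sim -k$ actually occurs. The existence of an automorphism $f$ with $f^{\ast}|_{T} = -{\rm id\/}$ is exactly the existence of an anti-symplectic automorphism of $M_{\sigma,1}$, which by Proposition~\ref{anti-symplectic} is equivalent to $NS(M_{\sigma,1}) \in \mathcal{G}_{1}(NS(M_{\sigma,1}))$. The N\'eron--Severi lattice has been computed as $NS(M_{\sigma,1}) = \Z H_{\sigma,1}$ with $(H_{\sigma,1}, H_{\sigma,1}) = 2r_{\sigma}s_{\sigma} = 2n/d^{2}$, so the question reduces to whether a rank-one lattice $\langle 2m \rangle$ with $m = n/d^{2}$ lies in $\mathcal{G}_{1}$ or $\mathcal{G}_{2}$. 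A short check shows $\langle 2m \rangle \in \mathcal{G}_{1}$ iff $m = 1$: the isometry $-{\rm id\/}$ acts trivially on $D_{\langle 2m \rangle} = \Z/2m\Z$ only when $m = 1$, while $-{\rm id\/}$ always reverses the orientation of the positive line. Hence the identification $k \sim -k$ is realized precisely when $d^{2} = n$.

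To conclude, in case (1) where $d^{2} < n$ or $d \leq 2$, no extra identification is imposed (trivially for $d \leq 2$, since then $-k = k$ in $(\Z/d\Z)^{\times}$), so the assignment $(\sigma,k) \mapsto (M_{\sigma,k}, \alpha_{\sigma,k})$ is injective with image of cardinality $\#\Sigma \cdot \varphi(d) = 2^{\tau(d^{-2}n)-1}\varphi(d)$, which equals $\#{\rm FM\/}^{d}(S)$ by Proposition~\ref{(d, n) number}. In case (2), where $d^{2} = n$ and $d \geq 3$, one has $\#\Sigma = 1$ and the map factors through $\Sigma \times \{j\}$, of cardinality $\varphi(d)/2$, which again matches $\#{\rm FM\/}^{d}(S)$. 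In each case the induced map onto ${\rm FM\/}^{d}(S)$ is therefore a bijection, which is the claim. The main technical obstacle is the genus computation in the middle paragraph, where one must verify that the rank-one positive-definite lattice $\langle 2m \rangle$ belongs to $\mathcal{G}_{1}$ if and only if $m = 1$; this is the precise lattice-theoretic point that produces the dichotomy between the two cases.
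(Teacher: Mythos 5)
Your argument is correct and follows essentially the same route as the paper: membership of the $(M_{\sigma,k},\alpha_{\sigma,k})$ in ${\rm FM\/}^{d}(S)$, reduction via Proposition \ref{moduli(sigma, k)} to distinguishing $(M_{\sigma,1},c\alpha_{\sigma,1})$ for varying $c$, and a cardinality comparison with Proposition \ref{(d, n) number}. Two minor remarks: the membership statement is not literally Proposition \ref{moduli(sigma, k)}(1) (which only concerns the untwisted surfaces $M_{\sigma,k}$) --- the paper invokes C\u ald\u araru's theorem for this --- and where the paper simply asserts $Aut(M_{\sigma,1})=\{ {\rm id\/} \}$ resp.\ $\{ {\rm id\/}, \iota_{\sigma,1} \}$, you derive the same dichotomy from Proposition \ref{anti-symplectic} together with the (correct) observation that the rank-one lattice $\langle 2m \rangle$ lies in $\mathcal{G}_{1}$ exactly when $m=1$, which is a harmless elaboration of the paper's step.
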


\begin{proof}
That 
$(M_{\sigma , k}, \alpha _{\sigma , k}) \in {\rm FM\/}^{d}(S)$ 
is a direct consequence of C\u ald\u araru's theorem (\cite{Ca2}). 
By Proposition \ref{(d, n) number}, 
it suffices to show that 
the twisted $K3$ surfaces in the right hand sides are not isomorphic to each other.

$(1)$ 
By Proposition \ref{moduli(sigma, k)}, 
the right hand side is identified with the set  
\[
\{ 
( M_{\sigma , 1}, k\alpha _{\sigma , 1} ) , \: 
(\sigma , k) \in \Sigma \times ({\Z}/d{\Z})^{\times} 
\} . 
\]
We have
$
( M_{\sigma , 1}, k\alpha _{\sigma , 1} ) 
\not\simeq 
( M_{\sigma ' , 1}, k'\alpha _{\sigma ' , 1} )
$
if 
$\sigma \ne \sigma '$. 
If 
$d^{2}<n$, 
then 
$Aut(M_{\sigma , 1}) =\{ {\rm id\/} \} $ 
so that 
$
( M_{\sigma , 1}, k\alpha _{\sigma , 1} ) 
\not\simeq 
( M_{\sigma , 1}, k'\alpha _{\sigma , 1} )$
if 
$k \ne k'$. 
If $d\leq 2$, 
there remains nothing to prove.

$(2)$ 
Let 
$d^{2}=n$ and $d\geq 3$. 
The right hand side is identified with the set  
\[
\{ 
( M_{\sigma , 1}, k\alpha _{\sigma , 1} ) , \: 
(\sigma , k) \in \Sigma \times \{ j \} 
\} . 
\]
Similarly as $(1)$, 
We have
$
( M_{\sigma , 1}, k\alpha _{\sigma , 1} ) 
\not\simeq 
( M_{\sigma ' , 1}, k'\alpha _{\sigma ' , 1} )
$
if 
$\sigma \ne \sigma '$. 
Now 
the $K3$ surface $M_{\sigma , 1}$ admits 
an anti-symplectic involution $\iota _{\sigma , 1}$ 
and 
$Aut(M_{\sigma , 1}) =\{ {\rm id\/} , \iota _{\sigma , 1} \} $. 
Therefore  
$
( M_{\sigma , 1}, k\alpha _{\sigma , 1} ) 
\not\simeq 
( M_{\sigma , 1}, k'\alpha _{\sigma , 1} )$
if 
$k \ne \pm k'$.
\end{proof}

When $d=1$ with $\tilde{k}=1$, 
the result is proved in \cite{H-L-O-Y'}.

\end{document}